\documentclass{ecgd-l}

\usepackage{enumerate,amsrefs,amssymb,amsthm,nicefrac}

\newtheorem{theorem}{Theorem}[section]
\newtheorem{lemma}[theorem]{Lemma}
\newtheorem{proposition}[theorem]{Proposition}
\newtheorem{fact}[theorem]{Fact}
\newtheorem{question}[theorem]{Question}

\theoremstyle{definition}
\newtheorem{definition}[theorem]{Definition}
\newtheorem{example}[theorem]{Example}
\newtheorem{corollary}[theorem]{Corollary}	

\newtheorem*{conjecture}{Conjecture}

\theoremstyle{remark}
\newtheorem{remark}[theorem]{Remark}

\numberwithin{equation}{section}

\newcommand{\eps}{\varepsilon}

\DeclareMathOperator{\dist}{dist}

\DeclareMathOperator{\id}{id}
\DeclareMathOperator{\diam}{diam}

\begin{document}

\title{Topological Conformal Dimension}

\author{Claudio A. DiMarco}
\address{215 Carnegie, Mathematics Department, Syracuse University,
Syracuse, NY 13244-1150}
\email{cdimarco@syr.edu}

\subjclass[2010]{Primary 28A80, 30L10; Secondary 28A78, 54F45}

\keywords{metric space, conformal dimension, topological dimension, quasisymmetric map, Cantor sets}

\date{\today}

\begin{abstract}
We investigate a quasisymmetrically invariant counterpart of the topological Hausdorff dimension of a metric space.  This invariant, called the \textit{topological conformal dimension}, gives a lower bound on the topological Hausdorff dimension of quasisymmetric images of the space.  We obtain results concerning the behavior of this quantity under products and unions, and compute it for some classical fractals.  The range of possible values of the topological conformal dimension is also considered, and we show that this quantity can be fractional.
\end{abstract}

\maketitle

\section{Introduction}

For a metric space $(X,d),$ the topological dimension can be defined inductively as 
\begin{align*}
\dim_t X=\inf\{c: X\text{ has a basis }\mathcal{U}\text{ such that } \dim_t\partial U\leq c-1~\text{for all}~ U\in\mathcal{U}\}, 
\end{align*}
where $\dim_t\varnothing=-1.$  Since topological dimension is bounded above by Hausdorff dimension, we write $\dim_t X\leq\dim_H X$  and say that $X$ is \textit{fractal} if $\dim_t X<\dim_H X$~\cite{BM}.

Topological dimension is invariant under homeomorphisms, while Hausdorff dimension is bi-Lipschitz invariant.  An intermediate interesting class of maps between the first two is quasisymmetric maps~\cites{JH,TV}.  In order to classify spaces up to quasisymmetric equivalence, it is useful to have a concept of dimension that is quasisymmetrically invariant.  One such example is conformal dimension~\cite{MT}, which we denote $\dim_C X.$  

Conformal dimension is modeled after Hausdorff dimension, which measures the size of a metric space.  Recently, Balka, Buczolich and Elekes~\cite{BBE2} introduced the \textit{topological Hausdorff dimension}, which is a bi-Lipschitz invariant sensitive to connectivity.  Unlike conformal dimension, this dimension is not quasisymmetrically invariant (Theorem~\ref{dimth theorem}).  Indeed, it can be arbitrarily increased by quasisymmetric maps (Fact \ref{effect of snowflake on tH}).  At present, there are no nontrivial lower bounds on topological Hausdorff dimension of quasisymmetric images, unlike for Hausdorff dimension.

In this paper, we introduce a quasisymmetric invariant that delivers such a bound: the \textit{topological conformal dimension}, denoted $\dim_{tC}X.$  Writing $\dim_{tH}X$ for the topological Hausdorff dimension, 
\begin{equation*}
\dim_{tH}f(X)\geq \dim_{tC}X
\end{equation*}
for every quasisymmetric mapping $f$ of $X.$  Our investigation uncovers a parallel between conformal dimension and topological conformal dimension.  The former detects rich families of curves, while the latter detects rich families of surfaces.  By definition,
\begin{equation*}
\dim_{tC}X=\inf\{c:X \text{ has a basis } \mathcal{U} \text{ such that } \dim_C \partial U\leq c-1 \text{ for all } U\in\mathcal{U}\}.
\end{equation*}

We obtain results concerning the behavior of this new quantity under products and unions, along with the range of its possible values.  The topological conformal dimension is computed for some classical fractals, often via comparison to the Hausdorff dimension.  The following theorem, proven in section 4, gives a two-sided estimate for the topological conformal dimension of a space that contains a  ``diffuse" family of surfaces.  This result is similar to that of Pansu; lower bounds for conformal dimension are based on the presence of a diffuse family of curves.  The Assouad dimension of $X$ is denoted $\dim_A X$~\cite{JH}.
\newtheorem*{ftc2}{Theorem \ref{ftc2}}
\begin{ftc2}
Let $(X,d,\mu)$ be a compact, doubling metric measure space with $\dim_A X=D<\infty.$  Suppose that there exists a family $\mathcal{Q}$ of surfaces with the following properties:
\begin{enumerate}[(i)] 
\item[\textnormal{(i)}] there is a family $\{f_{\alpha}:\alpha\in J\}$ of $L-$bi-Lipschitz maps such that $Q_{\alpha}=f_{\alpha}([0,1]^2)$ for each $Q_{\alpha}\in\mathcal{Q}.$
\item[\textnormal{(ii)}] there exists a measure $\nu_0$ on $\mathcal{Q}$, a constant $C_0,$ and $r_0>0$ so that 
\begin{equation*}\label{eqE}
0<\nu_0\{Q_{\alpha}\in\mathcal{Q}:Q_{\alpha}\cap B(x,r)\neq\emptyset\}\leq C_0 r^d
\end{equation*}
\noindent for all $x\in X$ and for all $r\leq r_0.$
\item[\textnormal{(iii)}] The union of edges $\bigcup_{\alpha\in J}f_{\alpha}\left(\partial([0,1]^2)\right)$ is not dense in $X.$
\end{enumerate}
Then $1+\frac{D}{D-d}\leq\dim_{tC} X\leq D .$
\end{ftc2}
Such spaces are shown to exist for every $0<d<1$ with $D=2+d.$  Since $2+\nicefrac{d}{2}\leq \dim_{tC}\leq 2+d$, it follows that the topological conformal dimension attains infinitely many fractional values even though they are not explicitly determined.  In particular, if $0<d<1$ and $D=2+d$, then $2<\dim_{tC}X<3$, which furnishes a collection of compact metric spaces with fractional topological conformal dimension.

A key difference between topological conformal dimension and topological Hausdorff dimension is that conformal dimension may increase under Lipschitz maps.  This makes it difficult to give a lower bound on topological conformal dimension, such as that in Theorem~\ref{ftc2}.

\section{Preliminaries and Basic Properties}

The subscripts of dim indicate the type of dimension.  By convention, every dimension of the empty set is $-1.$  For any set $A,~|A|$ means the cardinality of $A.$  We write $B(x,\eps)$ for the open ball centered at $x$ of radius $\eps.$  We will often refer to a basis of $X,$ by which we mean a basis for the topology on $X$ induced by the metric on $X.$  Our definition of topological conformal dimension is similar to that of the classical topological dimension and the topological Hausdorff dimension.  The \textit{topological Hausdorff dimension} is defined in~\cite{BBE2} as 
\begin{align*}
\dim_{tH}X=\inf\{c:X\text{ has a basis }\mathcal{U}\text{ such that }\dim_H\partial U\leq c-1~\forall U\in\mathcal{U}\}.
\end{align*}
An embedding $f:X\rightarrow Y$ is \textit{quasisymmetric} if there is a homeomorphism $\eta:[0,\infty)\rightarrow [0,\infty)$ so that
\begin{align*}
d_X(x,a)\leq td_X(x,b)~\text{ implies }~d_Y(f(x),f(a))\leq\eta(t)d_Y(f(x),f(b))
\end{align*}
for all triples $a,b,x$ of points in $X,$ and for all $t>0$ ~\cite{JH}.  The conformal dimension is defined via the Hausdorff dimension.  For the latter, recall that the \textit{p-dimensional Hausdorff measure} of $X$ is 
\begin{align*}
\mathcal{H}^p(X)=\lim_{\delta\rightarrow 0}\inf\left\{\sum (\diam E_j)^p: X\subset \bigcup E_j\text{ and }\diam E_j\leq\delta~\forall j\right\}.
\end{align*}
The \textit{Hausdorff dimension} of $X$ is $\dim_H X=\inf\{p:\mathcal{H}^p(X)=0\},$ and the \textit{conformal dimension} of $X$ is 
\begin{align*}
\dim_C X=\inf\left\{\dim_H f(X):f\text{ is quasisymmetric}\right\}.
\end{align*}

\begin{definition}
The \textit{topological conformal dimension} of a metric space $(X,d)$ is
\begin{equation}\label{defc}
\dim_{tC}X=\inf\{c:X \text{ has a basis } \mathcal{U} \text{ such that } \dim_C \partial U\leq c-1 \text{ for all } U\in\mathcal{U}\}
\end{equation}
\end{definition}

We will have occasion to use the following two facts.
\begin{align}
\dim_t X\leq \dim_C X\leq \dim_H X \label{f2}\\ 
\dim_t X\leq\dim_{tC}X\leq\dim_{tH}X\leq\dim_H X \label{f1}
\end{align}

\begin{proof}
Inequality \eqref{f2} is well known~\cite{MT}.  For the first inequality in \eqref{f1}, let $\mathcal{U}$ be a basis of $X$ and $U\in\mathcal{U}.$  By \eqref{f2} we have $\dim_t\partial U\leq\dim_C\partial U,$ so if $\dim_C\partial U\leq c-1$ then $\dim_t\partial U\leq c-1.$  It follows that $\dim_t X\leq\dim_{tC}X.$  Using \eqref{f2}, the same argument gives $\dim_{tC}X\leq\dim_{tH}X.$  The inequality $\dim_{tH}X\leq\dim_H X$ is Theorem~4.4~in~\cite{BBE2}.
\end{proof}
By Fact~4.1~in~\cite{BBE2} we have $\dim_{tH}X=0\Leftrightarrow \dim_t X=0,$ so \eqref{f2} yields
\begin{equation}
\dim_t X=0\Leftrightarrow \dim_{tC}X=0\Leftrightarrow \dim_{tH} X=0.\label{00}
\end{equation}

\begin{proposition}
If $X$ is a metric space, then $\dim_t X\leq \dim_{tC} X\leq \dim_C X.$ 
\end{proposition}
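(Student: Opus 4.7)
The first inequality $\dim_t X \leq \dim_{tC} X$ is in fact already available: the argument given above for the first part of \eqref{f1} proves precisely this, since it only uses $\dim_t \partial U \leq \dim_C \partial U$ from \eqref{f2}. So the real content is $\dim_{tC} X \leq \dim_C X$.

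My plan for the second inequality is to transport a good basis from a near-optimal quasisymmetric image of $X$ back to $X$ itself. Fix $c > \dim_C X$. By definition of $\dim_C X$, there is a quasisymmetric embedding $f\colon X \to Y$ with $\dim_H f(X) < c$; replacing $Y$ by $f(X)$, I will treat $f$ as a quasisymmetric homeomorphism onto $Y$. Then Theorem~4.4 of \cite{BBE2} gives $\dim_{tH} Y \leq \dim_H Y < c$, so $Y$ admits a basis $\mathcal{V}$ such that $\dim_H \partial V \leq c-1$ for every $V \in \mathcal{V}$.

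Next, I pull back: set $\mathcal{U} = \{f^{-1}(V) : V \in \mathcal{V}\}$. Because $f$ is a homeomorphism, $\mathcal{U}$ is a basis of $X$ and $\partial f^{-1}(V) = f^{-1}(\partial V)$ for each $V$. The restriction of $f^{-1}$ to $\partial V$ is quasisymmetric onto $\partial f^{-1}(V)$, so by quasisymmetric invariance of $\dim_C$ together with \eqref{f2},
\[
\dim_C \partial f^{-1}(V) = \dim_C \partial V \leq \dim_H \partial V \leq c-1.
\]
Hence $\mathcal{U}$ witnesses $\dim_{tC} X \leq c$, and letting $c \downarrow \dim_C X$ finishes the proof.

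The only mild technical points to verify are the standard facts that the restriction of a quasisymmetric homeomorphism to a subset is again quasisymmetric (immediate from the definition) and that conformal dimension depends only on the quasisymmetric equivalence class of the space (as in \cite{MT}). I do not anticipate any serious obstacle; the crux of the plan is simply recognizing that Theorem~4.4 of \cite{BBE2} supplies the basis on the image side and that $\dim_C$ passes through the homeomorphism $f^{-1}$ on each boundary.
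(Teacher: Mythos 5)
Your proof is correct and follows essentially the same route as the paper: pass to a near-optimal quasisymmetric image, apply $\dim_{tH}\leq\dim_H$ there, and transfer the conclusion back to $X$ by quasisymmetric invariance. The only difference is presentational --- the paper invokes quasisymmetric invariance of $\dim_{tC}$ as a black box and writes the chain $\dim_{tC}X=\dim_{tC}f(X)\leq\dim_{tH}f(X)\leq\dim_H f(X)$, whereas you unfold that invariance explicitly by pulling back the basis and using invariance of $\dim_C$ on each boundary.
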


\begin{proof}
 The first inequality is \eqref{f1}.  For the second, let $ \eps>0.$  By definition of $\dim_C X$ there is a quasisymmetric map $f$ with $\dim_H f(X) \leq \dim_C X+\eps.$  Since the topological conformal dimension is a quasisymmetric invariant, we have
\begin{equation}
\dim_{tC}X=\dim_{tC}f(X)\leq\dim_{tH}f(X)\leq\dim_H f(X)\leq\dim_C X+\eps. \qedhere
\end{equation}
\end{proof}

\begin{proposition}
The topological conformal dimension of a countable set is zero, and for open subsets of $\mathbb{R}^d$ and for smooth $d-$dimensional manifolds, the topological conformal dimension is $d.$
\end{proposition}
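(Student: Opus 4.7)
The plan is to combine the sandwich \eqref{f1} with the elementary lower bound $\dim_t X\leq\dim_{tC}X$ from the preceding proposition. All three statements then reduce to computing either a Hausdorff dimension of a sphere (for the upper bound) or a topological dimension of the space itself (for the lower bound).

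For the countable case I would argue: if $X$ is countable then $\mathcal{H}^p(X)=0$ for every $p>0$, so $\dim_H X=0$; by \eqref{f1} we get $0\leq\dim_{tC}X\leq\dim_H X=0$ and hence $\dim_{tC}X=0$. For open $U\subseteq\mathbb{R}^d$ I would establish the upper bound by taking as a basis the family $\mathcal{U}=\{B(x,r):\overline{B(x,r)}\subset U\}$ of open Euclidean balls: each boundary $\partial B(x,r)$ is a round $(d-1)$-sphere, so $\dim_H\partial B(x,r)=d-1$, and the trivial bound $\dim_C\leq\dim_H$ (immediate from taking $f=\id$ in the definition of conformal dimension) gives $\dim_C\partial B(x,r)\leq d-1$. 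Plugging $c=d$ into \eqref{defc} yields $\dim_{tC}U\leq d$. The matching lower bound is $\dim_t U=d\leq\dim_{tC}U$, by the proposition just proved.

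For a smooth $d$-manifold $M$, equipped with any Riemannian metric (or one induced from a smooth embedding into some $\mathbb{R}^N$), I would reduce to the Euclidean case by local charts: every point has a neighborhood on which a smooth chart is bi-Lipschitz onto a Euclidean ball. Taking a basis of sufficiently small such coordinate balls, the boundary of each element is bi-Lipschitz equivalent to the sphere $S^{d-1}$. Since Hausdorff dimension is a bi-Lipschitz invariant, these boundaries have Hausdorff dimension $d-1$, and, as before, conformal dimension at most $d-1$. Thus $\dim_{tC}M\leq d$, while $\dim_t M=d$ supplies the matching lower bound through the preceding proposition.

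I do not foresee a genuine obstacle here: the only point that requires a moment's care is interpreting the statement for smooth manifolds, since the topological conformal dimension depends on a metric — the argument needs a specific metric on $M$ that makes small coordinate balls bi-Lipschitz to Euclidean balls, which is automatic for Riemannian metrics and for metrics inherited from smooth embeddings, but the proposition is silent about which metric is intended. Assuming the natural choice, the proof is essentially the Euclidean one transported by the chart.
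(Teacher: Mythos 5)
Your argument is correct, but for the second and third assertions it is more elaborate than the paper's, which disposes of all three cases in one line by the squeeze \eqref{f1}: since $\dim_t X\leq\dim_{tC}X\leq\dim_{tH}X\leq\dim_H X$, a countable set has $\dim_{tC}=0$ because $\dim_H=0$, and an open subset of $\mathbb{R}^d$ or a smooth $d$-manifold has $\dim_{tC}=d$ because $\dim_t=\dim_H=d$. You use exactly this squeeze for the countable case and for every lower bound, but for the upper bounds on open sets and manifolds you instead build an explicit basis of (coordinate) balls and estimate the conformal dimension of the boundary spheres via $\dim_C\leq\dim_H=d-1$. That extra work is sound --- the bi-Lipschitz chart argument is fine, and your caveat about which metric the manifold carries is a legitimate observation that the paper leaves implicit --- but it buys nothing here that the inequality $\dim_{tC}X\leq\dim_H X$ does not already give, since $\dim_H U=d$ for open $U\subseteq\mathbb{R}^d$ and for Riemannian or embedded smooth $d$-manifolds. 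Where your style of argument \emph{would} pay off is for spaces with $\dim_t X<\dim_H X$, where the squeeze is no longer sharp and one must exhibit a basis with low-dimensional boundaries by hand, as the paper does later for the Sierpinski carpet and the Menger sponge.
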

\begin{proof}
This follows immediately from \eqref{f1}.
\end{proof}

It is clear from the definition that the topological conformal dimension is invariant under quasisymmetric maps.  In particular, it is invariant under bi-Lipschitz maps.  The following examples are along the lines of those in ~\cite{BBE2}.  The next example shows that the topological conformal dimension can increase under Lipschitz maps in general, and Example~4.12~in~\cite{BBE2} gives an example with an injective Lipschitz map.

\begin{example}\label{lip example}
 Let $K\subset [0,1]$ be a Cantor set of positive Lebesgue measure.  Lemma~4.10~in~\cite{BBE2} gives a surjective Lipschitz map $f:K\rightarrow [0,1],~\text{namely}~f(x)=\frac{m((-\infty,x)\cap K)}{m(K)},$ where $m$ is Lebesgue measure.  Since $K$ has a basis of clopen sets, we have $\dim_t K=\dim_{tC}K=0.$  Also $1=\dim_{tC}[0,1]=\dim_{tC}f(K),$ so $\dim_{tC}K<\dim_{tC}f(K).$
\end{example}

\section{Behavior Under Inclusions, Unions, and Products}

As with other types of dimension, tC-dimension is monotone under inclusion.
\begin{proposition} \textnormal{(Monotonicity under inclusion)}
If $X$ is a subset of a metric space $Y$, then $\dim_{tC}X\leq\dim_{tC}Y.$
\end{proposition}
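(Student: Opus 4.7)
The plan is to take any basis $\mathcal{V}$ of $Y$ that witnesses the value $\dim_{tC}Y$, restrict it to $X$ in the obvious way by setting $\mathcal{U}=\{V\cap X:V\in\mathcal{V}\}$, and verify that this restricted family satisfies the corresponding bound on boundary $\dim_C$. Since $\mathcal{U}$ is automatically a basis for the subspace topology on $X$, the only nontrivial content is the comparison of boundaries.

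The key point-set observation is that for each $V\in\mathcal{V}$, the boundary of $U=V\cap X$ taken in $X$ lies inside $\partial_Y V$. Indeed, $\overline{V\cap X}^{X}\subseteq\overline{V}^{Y}\cap X$, and subtracting $V\cap X$ gives
\begin{equation*}
\partial_X(V\cap X)\subseteq(\overline{V}^{Y}\setminus V)\cap X=\partial_Y V\cap X\subseteq\partial_Y V.
\end{equation*}
Combined with the monotonicity of conformal dimension under inclusion, this yields $\dim_C\partial_X U\le\dim_C\partial_Y V$. That monotonicity of $\dim_C$ is itself elementary: if $A\subseteq B$ and $f\colon B\to Z$ is quasisymmetric, then $f|_A$ is quasisymmetric and $\dim_H f(A)\le\dim_H f(B)$, so taking the infimum over all such $f$ gives $\dim_C A\le\dim_C B$.

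With these two facts in hand, the conclusion is formal. For any $c>\dim_{tC}Y$, pick $\mathcal{V}$ with $\dim_C\partial_Y V\le c-1$ for all $V\in\mathcal{V}$; then the restricted basis $\mathcal{U}$ of $X$ satisfies $\dim_C\partial_X U\le c-1$ for all $U\in\mathcal{U}$, so $\dim_{tC}X\le c$. Letting $c\searrow\dim_{tC}Y$ yields the claim.

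I do not expect a genuine obstacle here; the argument mirrors the classical proofs of monotonicity for $\dim_t$ and $\dim_{tH}$. The only point one should not skip over is the monotonicity of $\dim_C$ under inclusion, since the paper has not yet singled it out as a lemma, and it is what lets the boundary comparison $\partial_X U\subseteq\partial_Y V$ translate into the desired numerical inequality.
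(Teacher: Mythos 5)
Your proof is correct and follows essentially the same route as the paper: restrict a witnessing basis of $Y$ to $X$, observe $\partial_X(V\cap X)\subseteq\partial_Y V$, and invoke monotonicity of conformal dimension (which the paper likewise reduces to monotonicity of Hausdorff dimension under restriction of quasisymmetric maps). No gaps.
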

\begin{proof}
If $\mathcal{U}$ is a basis of $Y$ then $\{U\cap X:U\in\mathcal{U}\}$ is a basis in $X$ and $\partial_X(U\cap X)\subset\partial_Y U$ holds for all $U\in\mathcal{U}.$  Note that monotonicity of conformal dimension follows from monotonicity of Hausdorff dimension.  Thus $\dim_C \partial_X(U\cap X)\leq\dim_C\partial_Y U,$ so the result follows.
\end{proof}
  
Conformal dimension is stable under finite disjoint unions of compact sets (\cite{MT}, Proposition~5.2.3).  The same is true of the topological conformal dimension.

\begin{fact}\label{th1}
 If $S_1$ and $S_2$ are disjoint compact subsets of a metric space and if $S=S_1\cup S_2,$ then $\dim_{tC}S=\max\{\dim_{tC}S_1,\dim_{tC}S_2\}.$
\end{fact}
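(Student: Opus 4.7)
The plan is to prove the two inequalities separately. The direction $\dim_{tC}S \geq \max\{\dim_{tC}S_1, \dim_{tC}S_2\}$ is immediate from the monotonicity proposition proved just above, since each $S_i$ is a subset of $S$.

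For the reverse inequality, I would exploit the fact that, since $S_1$ and $S_2$ are disjoint compact sets in a metric space, $\dist(S_1,S_2)>0$. Consequently, viewing $S=S_1\cup S_2$ with its subspace metric, each $S_i$ is clopen in $S$. Two useful consequences follow: any set that is open in $S_i$ is automatically open in $S$, and for any $U\subset S_i$ open in $S_i$, the boundary taken in $S$ coincides with the boundary taken in $S_i$ (points of $S\setminus S_i=S_{3-i}$ lie in the interior of $S_{3-i}$ within $S$, so cannot be limit points of $U$).

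Given any $c>\max\{\dim_{tC}S_1,\dim_{tC}S_2\}$, pick for each $i=1,2$ a basis $\mathcal{U}_i$ of $S_i$ with $\dim_C\partial_{S_i}U\leq c-1$ for all $U\in\mathcal{U}_i$. I would then set $\mathcal{U}:=\mathcal{U}_1\cup\mathcal{U}_2$ and check that it is a basis of $S$: any $x\in S$ lies in exactly one $S_i$, and a basic $S$-neighborhood of $x$ has the form $V\cap S$ for some $V$ open in the ambient space; shrinking $V$ so that $V\cap S\subset S_i$ (possible because $S_i$ is open in $S$), the basis property of $\mathcal{U}_i$ yields some $U\in\mathcal{U}_i$ with $x\in U\subset V\cap S$. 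By the boundary identification above, $\dim_C\partial_S U=\dim_C\partial_{S_i}U\leq c-1$ for every $U\in\mathcal{U}$, so $\dim_{tC}S\leq c$. Letting $c$ decrease to $\max\{\dim_{tC}S_1,\dim_{tC}S_2\}$ completes the argument.

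I do not expect any serious obstacles; the only step requiring care is the identification of boundaries in $S$ versus in $S_i$, which is transparent thanks to the clopen structure forced by disjointness and compactness. Note also that compactness is used only to guarantee positive separation, so the same proof works for any pair of closed sets at positive distance.
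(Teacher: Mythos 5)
Your argument is correct, and it reaches the conclusion by a route that differs from the paper's in one substantive way. The paper also takes $\mathcal{U}$ built from the bases $\mathcal{U}_1,\mathcal{U}_2$, but it works with basic sets of the form $U_1\cup U_2$ and therefore has to compute the conformal dimension of the disjoint compact union $\partial U=\partial U_1\cup\partial U_2$; for this it invokes Proposition~5.2.3 of Mackay--Tyson (finite stability of conformal dimension under disjoint compact unions), which is a nontrivial input. You instead arrange, via the clopen decomposition forced by $\dist(S_1,S_2)>0$, that every basic set lies entirely inside one $S_i$ and that $\partial_S U=\partial_{S_i}U$; no statement about conformal dimension of unions is ever needed, only the (trivial) identification of boundaries. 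This makes your proof more elementary and, as you observe, immediately extends to closed sets at positive distance rather than compact sets. The one place to be slightly careful is the quantifier bookkeeping in the definition of $\dim_{tC}$: for $c$ strictly above $\max\{\dim_{tC}S_1,\dim_{tC}S_2\}$ you can indeed find, for each $i$, some $c_i\le c$ in the defining set and hence a basis of $S_i$ with $\dim_C\partial_{S_i}U\le c_i-1\le c-1$, which is exactly what you use; the paper handles the same point with an explicit $\eps$. Both proofs establish the reverse inequality by monotonicity, so the approaches agree there.
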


\begin{proof}
 Let $\eps>0.$  We find a basis $\mathcal{U}$ of $S$ and $d\geq 0$ such that $\dim_C \partial U\leq d-1$ for all  $U\in\mathcal{U},$ and such that $d\leq \max\{\dim_{tC}S_1,\dim_{tC}S_2\}+\eps.$  
By definition of the topological conformal dimension, for $i=1,2$ there exist $\mathcal{U}_i,d_i$ such that $\mathcal{U}_i$ is a basis for $S_i,~\dim_C \partial U_i\leq d_i-1$ for all $U_i\in\mathcal{U}_i,$ and $d_i\leq \dim_{tC}S_i+\eps.$

Consider the basis $\mathcal{U}=\mathcal{U}_1\cup\mathcal{U}_2$ of $S$ and let  $d=\max\{d_1,d_2\}.$  For each $U=U_1\cup U_2\in\mathcal{U},~\partial U_1$ and $\partial U_2$ are disjoint compact sets, so $\partial U=\partial U_1\cup \partial U_2$ and Proposition~5.2.3~in~\cite{MT} implies 
\begin{equation}
\begin{split}
\dim_C \partial U=&\max\{\dim_C \partial U_1,\dim_C \partial U_2\}\\ \label{x}
\leq & \max\{d_1-1,d_2-1\}\\
\leq & \max\{\dim_{tC}S_1,\dim_{tC}S_2\}+\eps-1.
\end{split}
\end{equation}
Inequality (\ref{x}) yields $\dim_{tC}S\leq \max\{\dim_{tC}S_1,\dim_{tC}S_2\}$ and  monotonicity gives the opposite inequality.
\end{proof}

\begin{corollary}\label{cor12}
 If $\{X_1,\dots,X_n\}$ is a disjoint family of compact subsets of a metric space, then $\dim_{tC}\bigcup_{i=1}^n X_i=\max\{\dim_{tC} X_i|i=1,\dots,n\}.$
\end{corollary}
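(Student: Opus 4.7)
The plan is to prove Corollary~\ref{cor12} by induction on $n$, with Fact~\ref{th1} supplying both the base case $n=2$ and the inductive step. The case $n=1$ is trivial, and the case $n=2$ is exactly Fact~\ref{th1}.

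For the inductive step, suppose the statement holds for some $n-1\geq 2$, and let $\{X_1,\ldots,X_n\}$ be a disjoint family of compact subsets of a metric space. I would write
\begin{equation*}
\bigcup_{i=1}^n X_i = \Bigl(\bigcup_{i=1}^{n-1} X_i\Bigr)\cup X_n,
\end{equation*}
and check that the two pieces on the right satisfy the hypotheses of Fact~\ref{th1}: the set $\bigcup_{i=1}^{n-1} X_i$ is compact as a finite union of compact sets, $X_n$ is compact by assumption, and the two are disjoint since the family $\{X_1,\ldots,X_n\}$ is disjoint.

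Applying Fact~\ref{th1} then gives
\begin{equation*}
\dim_{tC}\bigcup_{i=1}^n X_i = \max\Bigl\{\dim_{tC}\bigcup_{i=1}^{n-1}X_i,\ \dim_{tC}X_n\Bigr\},
\end{equation*}
and the inductive hypothesis rewrites the first term as $\max\{\dim_{tC}X_i : i=1,\ldots,n-1\}$, yielding the desired equality. There is essentially no obstacle here; the only thing to verify beyond the routine induction is the compactness and disjointness needed to invoke Fact~\ref{th1} at each stage, both of which are immediate.
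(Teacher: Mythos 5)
Your induction via Fact~\ref{th1} is correct, and it is exactly the argument the paper intends: the corollary is stated without proof as an immediate consequence of Fact~\ref{th1}, relying on the same observations that a finite union of compact sets is compact and that pairwise disjointness of the family makes $\bigcup_{i=1}^{n-1}X_i$ and $X_n$ disjoint.
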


Unlike the Hausdorff dimension, the next example shows that finite stability does not hold for non-closed sets.

\begin{example}
Each of $\mathbb{Q}$ and $\mathbb{R}\setminus\mathbb{Q}$ has a basis of clopen sets, so their topological dimensions are zero.  Then \eqref{00} gives $\dim_{tC}\mathbb{Q}=\dim_{tC}(\mathbb{R}\setminus\mathbb{Q})=0.$  By \eqref{f1} we have $\dim_{tC}\mathbb{R}=1$ so that
\begin{align*}
\dim_{tC}\mathbb{R}=1>0=\max\{\dim_{tC}\mathbb{Q},\dim_{tC}(\mathbb{R}\setminus\mathbb{Q})\}.
\end{align*}
\end{example}

\subsection{Dimension of Products}
For two metric spaces $X$ and $Y$ we consider the product space $Z=X\times Y$ with metric
\begin{equation*}
d_Z((x_1,y_1),(x_2,y_2))=\max (d_X(x_1,x_2),d_Y(y_1,y_2)).
\end{equation*}
Under certain favorable conditions, Hausdorff dimension is additive under products~\cite{PM}.  This is not the case for conformal dimension~\cite{MT}.  In section 4 we will see that topological conformal dimension is also not additive under products (Corollary \ref{tc not additive}).  In this section we provide an upper bound on the tC-dimension of the product of two Jordan arcs.  This requires a lemma pertaining to the conformal dimension of the union of two \textit{proper}, \textit{uniformly perfect} subsets of a metric space.

\begin{lemma}\label{arcthm}
Let $(X,d)$ be a metric space with  proper uniformly perfect subsets $A$ and $B$ such that $A\cup B=X.$  Then $\dim_C X=\max\{\dim_C A,\dim_C B\}.$
\end{lemma}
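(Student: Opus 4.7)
The inequality $\dim_C X\ge\max\{\dim_C A,\dim_C B\}$ is immediate from monotonicity of conformal dimension under inclusion: any quasisymmetric embedding $f$ of $X$ restricts to quasisymmetric embeddings of $A$ and of $B$, so $\dim_H f(X)\ge\max\{\dim_H f(A),\dim_H f(B)\}$, and taking the infimum over $f$ gives the lower bound.

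For the reverse inequality, fix $\eps>0$ and set $d=\max\{\dim_C A,\dim_C B\}$. By definition of conformal dimension I would choose quasisymmetric embeddings $\varphi_A\colon A\to Y_A$ and $\varphi_B\colon B\to Y_B$ with $\dim_H Y_A\le\dim_C A+\eps$ and $\dim_H Y_B\le\dim_C B+\eps$. The plan is then to glue $\varphi_A$ and $\varphi_B$ along the overlap $A\cap B$ into a single quasisymmetric embedding $\Phi\colon X\to Y$. Once such a $\Phi$ is in hand, finite stability of Hausdorff dimension gives $\dim_H Y=\dim_H(\Phi(A)\cup\Phi(B))\le\max\{\dim_H Y_A,\dim_H Y_B\}\le d+\eps$, and hence $\dim_C X\le d+\eps$; letting $\eps\to 0$ yields the lemma.

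The construction parallels the disjoint compact case in Proposition~5.2.3 of~\cite{MT}. Because $A$ and $B$ are proper subsets of $X$, there exist $a_0\in X\setminus A$ and $b_0\in X\setminus B$ that I would fix as basepoints, using their distances to $A$ and $B$ to set a common scale at which to normalize $\varphi_A$ and $\varphi_B$ before pasting them together. Quasisymmetry of the combined map $\Phi$ is automatic on any triple lying entirely in $A$ or entirely in $B$, being inherited from $\varphi_A$ or $\varphi_B$.

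The main obstacle is verifying the $\eta$-condition for mixed triples, having one point in $A\setminus B$ and another in $B\setminus A$. In the disjoint case of~\cite{MT}, a positive distance between $A$ and $B$ trivializes this step, but here the control must be extracted from the hypotheses. The uniform perfectness of $A$ and of $B$ is exactly what supplies, at every point of $A\cup B$ and every scale, witness points inside each set at comparable distance; this allows a mixed triple to be compared through a chain of triples each lying entirely in $A$ or entirely in $B$, to which the quasisymmetry of $\varphi_A$ and $\varphi_B$ applies. The properness hypothesis anchors this chain at the basepoints $a_0,b_0$, giving a uniform bound on the composed distortion function $\eta$. Carrying out this chain estimate and showing that the resulting control on $\Phi$ is independent of the choice of mixed triple is the technical heart of the argument.
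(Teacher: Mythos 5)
Your lower bound via monotonicity is fine and matches the paper. The upper bound, however, has a genuine gap at its very first step: the map $\Phi$ you propose to build by ``gluing $\varphi_A$ and $\varphi_B$ along the overlap $A\cap B$'' is not well defined. The hypotheses place no restriction on $A\cap B$ (it can be all of $A$, or any large set), and the two embeddings $\varphi_A$ and $\varphi_B$ are chosen independently, so their restrictions to $A\cap B$ will in general be incompatible --- not equal, and not even related by a map one could hope to extend. Reconciling them is not a technicality to be handled after the fact by a chain estimate on mixed triples; it is the entire content of the lemma. (Even granting well-definedness, the chain argument you sketch is suspect: quasisymmetry is a three-point condition, and composing distortion controls along chains of unbounded combinatorial length does not yield a uniform $\eta$.) The paper avoids the gluing problem altogether by never mapping $A$ and $B$ into external spaces: it uses Ha\"issinsky's sewing theorem (Theorem~\ref{haissinsky}) to replace the metric on $X$ itself, first by a metric $d_1$ in which $A$ is bi-Lipschitz to a nearly dimension-minimizing image $g(A)$, then by a metric $d_2$ doing the same for $B$. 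The crucial feature making the two steps compatible is conclusion (ii) of that theorem: the second deformation is locally bi-Lipschitz on $X\setminus B$, hence preserves the Hausdorff dimension already achieved on $A\supset X\setminus B$, after which finite stability of Hausdorff dimension finishes the argument. If you want to pursue your route, you would essentially have to reprove that sewing theorem.

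A secondary but real misreading: in this lemma ``proper'' is a metric property --- the paper defines a space to be proper if its closed bounded subsets are compact --- not the set-theoretic statement $A\neq X$. So your basepoints $a_0\in X\setminus A$ and $b_0\in X\setminus B$ need not exist, and in any case properness is used in the paper as a hypothesis of Ha\"issinsky's theorem (together with a short argument that $X=A\cup B$ is then itself proper), not as a source of anchor points.
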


The proof of Lemma \ref{arcthm} relies heavily on Theorem~1~in~\cite{PH} several times, so it is included below.  A metric space $X$ is called \textit{proper} if closed and bounded subsets are compact.  It is called \textit{uniformly perfect} if there is a constant $C\geq 1$ so that for each $x\in X$ and for each $r>0$ the set $B(x,r)\setminus B(x,\frac{r}{C})$ is nonempty whenever the set $X\setminus B(x,r)$ is nonempty~\cite{JH}.  

\begin{theorem}\textnormal{(Haissinsky)}\label{haissinsky}
Let $(X,d_X)$ be a proper metric space containing at least two points and $(Y,d_Y)$ a proper uniformly perfect space.  Suppose there is a quasisymmetric embedding $f:Y\rightarrow X.$  Then there is a metric $\hat{d}$ on $X$ such that 
\begin{enumerate}[(i)]
\item[\textnormal{(i)}] $\id:(X,d_X)\rightarrow (X,\hat{d})$ is quasisymmetric;
\item[\textnormal{(ii)}] $\id:(X\setminus f(Y),d_X)\rightarrow (X\setminus f(Y),\hat{d})$ is locally bi-Lipschitz;
\item[\textnormal{(iii)}] $f:(Y,d_Y)\rightarrow (X,\hat{d})$ is bi-Lipschitz onto its image.
\end{enumerate}
\end{theorem}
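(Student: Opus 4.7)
The plan is to build $\hat d$ by transporting $d_Y$ to $f(Y) \subset X$ via $f$, then extending the transported metric to all of $X$ through a chain-infimum whose weights depend on the distance to $f(Y)$ and revert to $d_X$ outside a fixed neighborhood of $f(Y)$. First I would set $\rho(f(y_1),f(y_2)) := d_Y(y_1,y_2)$ on $f(Y)$. Since $f$ is quasisymmetric and $(Y,d_Y)$ is uniformly perfect, a Tukia--V\"ais\"al\"a type upgrade replaces the abstract gauge $\eta$ by a two-sided power-type gauge when restricted to bounded sets: there exist $\alpha \in (0,1]$ and $C \ge 1$ with
\[
C^{-1}\bigl(d_X(a,b)/\diam(f(Y))\bigr)^{1/\alpha} \;\le\; \rho(a,b)/\diam(Y) \;\le\; C\bigl(d_X(a,b)/\diam(f(Y))\bigr)^{\alpha}
\]
for $a,b\in f(Y)$. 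Uniform perfectness of $Y$ is essential here: it prevents $\rho$ from collapsing at small scales and yields this two-sided power control.

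Next I would fix $r_0>0$ and introduce a conformal weight $w:X\to(0,\infty)$ depending only on $\dist(\cdot,f(Y))$, equal to $1$ outside the $r_0$-neighborhood of $f(Y)$ and behaving like a definite power of $\dist(\cdot,f(Y))$ nearby, with the exponent calibrated so that weighted $d_X$-distances of points near $f(Y)$ match $\rho$ in scale. Set $\sigma(a,b) := \rho(a,b)$ when both $a,b \in f(Y)$, and $\sigma(a,b) := \tfrac12(w(a)+w(b))\,d_X(a,b)$ otherwise, and define
\[
\hat d(x,y) \;=\; \inf \sum_{i=0}^{n-1} \sigma(x_i,x_{i+1})
\]
over finite chains $x=x_0,x_1,\dots,x_n=y$ in $X$. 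Property (ii) is then immediate: $w \equiv 1$ outside the $r_0$-neighborhood, so on any compact subset of $X \setminus f(Y)$ (which exists by properness of $X$), minimizing chains cannot gain by detouring to $f(Y)$ at a bounded cost, so $\hat d \asymp d_X$ locally. Property (iii) follows because $\hat d|_{f(Y)} \asymp \rho$ by construction: the power growth of $w$ is calibrated so that chains dipping into $X \setminus f(Y)$ cannot beat the direct $\rho$-distance by more than a bounded factor.

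Property (i), that $\id:(X,d_X) \to (X,\hat d)$ is quasisymmetric, is the main obstacle. Given a triple $a,b,x$ with $d_X(x,a) \le t\,d_X(x,b)$, the minimizing chains for $\hat d(x,a)$ and $\hat d(x,b)$ can have very different geometries — one may travel largely through $f(Y)$ while the other avoids it. The crux is to show that the ratio $\hat d(x,a)/\hat d(x,b)$ remains bounded by a gauge depending on $t$ alone, which requires the power exponent built into $w$ to exactly match the power rate $\alpha$ of the QS distortion of $f$ across all scales. Verifying this global matching, together with checking that $\hat d$ is a genuine non-degenerate metric (again via uniform perfectness of $Y$, which guarantees $\rho$ separates points at the correct scale), completes the proof.
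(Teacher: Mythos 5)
The paper does not prove this statement at all: it is Theorem~1 of Ha\"issinsky's paper \emph{A sewing problem in metric spaces} (reference [PH]), quoted verbatim so that the proof of Lemma~\ref{arcthm} can invoke it. So there is no in-paper proof to compare against, and your proposal has to stand on its own as a proof of Ha\"issinsky's theorem. It does not. You candidly defer the central difficulty --- showing that $\id:(X,d_X)\to(X,\hat d)$ is quasisymmetric while $f$ simultaneously becomes bi-Lipschitz --- to a final ``verifying this global matching,'' but that verification is essentially the entire theorem; everything before it is routine setup. A sketch that ends by naming the hard step as something still to be checked is not a proof.

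Moreover, the specific construction you propose cannot be repaired as stated. Your weight $w$ depends only on $\dist(\cdot,f(Y))$ and behaves like a single fixed power near $f(Y)$; such a radial weight forces $\hat d$ restricted to $f(Y)$ to be comparable to a fixed power of $d_X$, \emph{uniformly in location along $f(Y)$}. But a general quasisymmetric $f$ distorts distances at rates that vary from point to point and scale to scale (power quasisymmetry only gives two-sided bounds with exponents $\alpha$ and $1/\alpha$, not a single exponent). Take $X=[0,1]^2$, $Y=[0,1]$, $f(t)=(g(t),0)$ with $g$ a quasisymmetric self-map of $[0,1]$ that compresses near one endpoint and expands near the other: then $\rho=d_Y\circ(f^{-1}\times f^{-1})$ is not comparable to any fixed power of $d_X$ along the segment, so no radial weight can achieve $\hat d|_{f(Y)}\asymp\rho$, and property (iii) fails. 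The weight must instead encode the local behavior of $f$ --- e.g.\ via a Whitney decomposition of $X\setminus f(Y)$ with $w$ on a Whitney piece determined by the $\rho$-size of its shadow on $f(Y)$ --- after which one must still prove that the resulting chain infimum is non-degenerate (a quasi-triangle-inequality/Frink-metrization issue you attribute, incorrectly, to uniform perfectness of $Y$) and that the identity is quasisymmetric. Your two-sided H\"older estimate normalized by $\diam(Y)$ also silently assumes $Y$ is bounded, which is not hypothesized. Given that the theorem is an imported black box in this paper, the appropriate move is to cite [PH], not to reprove it.
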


\begin{proof}[Proof of Lemma \ref{arcthm}]
Put $c=\dim_C(A,d)$ and $c'=\dim_C(B,d).$  Let $\eps>0.$  By definition of conformal dimension there is a quasisymmetric map $g$ with $\dim_H g(A,d)\leq c+\eps.$  Then $f=g^{-1}:g(A,d)\rightarrow X$ is a quasisymmetric embedding.  Since $A$ and $B$ are proper, it follows that $X$ is also proper.  Indeed, let $E\subset X$ be closed and bounded.  Then $E=(E\cap A)\cup (E\cap B)$ and since $E$ is closed in $X,~E\cap A$ and $E\cap B$ are closed in $A$ and $B,$ respectively.  Since $E$ is bounded, $E\cap A$ and $E\cap B$ are also bounded.  Therefore $E\cap A$ and $E\cap B$ are compact since $A$ and $B$ are proper.  Then $E$ is a union of two compact sets and hence is compact, so $X$ is proper.  By Theorem~\ref{haissinsky} there is a metric $d_1$ on $X$ such that 
\begin{align}
\id\colon&(X,d)\rightarrow (X,d_1)~\text{is quasisymmetric} \label{1}\\
\id\colon&(X\setminus A,d)\rightarrow (X\setminus A,d_1)~\text{is locally bi-Lipschitz}\label{2}\\
f\colon&g(A,d)\rightarrow (A,d_1)~\text{is bi-Lipschitz}\label{3}
\end{align}

Since the restriction of a quasisymmetric map is again quasisymmetric, \eqref{1} implies $\dim_C(B,d_1)=\dim_C(B,d)=c'.$  Therefore there is a quasisymmetric map $r$ such that $\dim_H r(B,d_1)\leq c'+\eps.$  Then $h=r^{-1}:r(B,d_1)\rightarrow (X,d_1)$ is a quasisymmetric embedding with image $(B,d_1).$  Since $r$ is a homeomorphism,  $r(B,d_1)$ is proper and uniformly perfect.  Another application of Theorem~\ref{haissinsky} gives a metric $d_2$ on $X$ such that
\begin{align}
\id:~&(X,d_1)\rightarrow (X,d_2)~\text{is quasisymmetric}\label{4}\\
\id:~&(X\setminus B,d_1)\rightarrow (X\setminus B,d_2)~\text{is locally bi-Lipschitz}\label{5}\\
h:~&r(B,d_1)\rightarrow (B,d_2)~\text{is bi-Lipschitz}\label{6}
\end{align}

Locally bi-Lipschitz maps preserve Hausdorff dimension, so \eqref{5} and \eqref{3} yield
\begin{equation}\label{6'}
\begin{split}
\dim_H(X\setminus B,d_2)&=\dim_H(X\setminus B,d_1)\\
&\leq\dim_H(A,d_1)\\
&=\dim_H g(A,d).
\end{split}
\end{equation}
Also \eqref{6} shows that $\dim_H(B,d_2)=\dim_H r(B,d_1),$ so by \eqref{6'} we have
\begin{equation}
\begin{split}
\dim_H(X,d_2)=&\max\{\dim_H(X\setminus B,d_2),\dim_H(B,d_2)\}\label{7}\\
\leq& \max\{\dim_H g(A,d),\dim_H r(B,d_1)\}\\
\leq& \max\{c+\eps,c'+\eps\}\\
=&\max\{c,c'\}+\eps.
\end{split}
\end{equation}

Finally \eqref{4}, \eqref{7} and monotonicity give $\dim_C (X,d_1)=\max\{c,c'\}$ so that (\ref{1}) implies $\dim_C (X,d)=\max\{c,c'\}.$
\end{proof}

\begin{theorem}
If $\Gamma$ and $\Lambda$ are Jordan arcs then 
\[
2\leq\dim_{tC}(\Gamma\times\Lambda)\leq \max\{\dim_C\Gamma,\dim_C\Lambda\}+1.
\]
\end{theorem}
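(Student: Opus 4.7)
The lower bound is immediate: since $\Gamma$ and $\Lambda$ are homeomorphic to $[0,1]$, $\Gamma\times\Lambda$ is homeomorphic to $[0,1]^2$, so $\dim_t(\Gamma\times\Lambda)=2$, and inequality~\eqref{f1} gives $\dim_{tC}(\Gamma\times\Lambda)\geq 2$.

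For the upper bound, my plan is to exhibit a basis $\mathcal{U}$ of $\Gamma\times\Lambda$ all of whose elements satisfy $\dim_C\partial U\leq\max\{\dim_C\Gamma,\dim_C\Lambda\}$, which by the definition~\eqref{defc} yields the conclusion. Let $\eta\colon[0,1]\to\Gamma$ and $\psi\colon[0,1]\to\Lambda$ be parameterizing homeomorphisms and take $\mathcal{U}$ to consist of the open ``rectangles'' $U=\eta(I_1)\times\psi(I_2)$, where $I_1,I_2$ range over relatively open subintervals of $[0,1]$; since such intervals form a basis of $[0,1]$, this is a basis for the product topology on $\Gamma\times\Lambda$. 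A direct computation of closures in each factor gives
\[
\partial U \;=\; \bigl(\eta(\partial I_1)\times\psi(\overline{I_2})\bigr)\cup\bigl(\eta(\overline{I_1})\times\psi(\partial I_2)\bigr),
\]
a union of at most four \emph{edges}, each an isometric copy---in the product metric $\max\{d_\Gamma,d_\Lambda\}$---of a sub-arc of $\Gamma$ or $\Lambda$.

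In the generic case $|\partial I_1|=|\partial I_2|=2$ the four edges form a cycle meeting pairwise at four corner points. I group them into two ``L-shapes'' $A$ and $B$, each a union of one vertical and one horizontal edge sharing a single corner, so that $A\cup B=\partial U$ and $A\cap B$ consists of the two opposite corners. The plan is to apply Lemma~\ref{arcthm} twice: first to the decomposition $\partial U=A\cup B$, and then to each L-shape, written as a union of its two sub-arc constituents joined at an endpoint. The hypotheses of Lemma~\ref{arcthm} require each piece to be proper (immediate, as all pieces are compact subsets of $\Gamma\times\Lambda$) and uniformly perfect. For the latter, each L-shape and each sub-arc is connected with more than one point, and any such metric space $X$ is uniformly perfect with constant~$2$: the continuous function $d(x,\cdot)$ has connected image in $[0,\infty)$ containing $0$, so it contains $[0,r]$ whenever $X\not\subset B(x,r)$, producing a point at distance exactly $r/2$ from $x$. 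After the two applications, $\dim_C\partial U$ reduces to the maximum of $\dim_C$ over the four edges; monotonicity of $\dim_C$ then bounds each of these by $\dim_C\Gamma$ or $\dim_C\Lambda$.

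Degenerate cases with $|\partial I_i|<2$ yield fewer edges and are handled by the same argument or a simpler version, invoking the disjoint-union stability (Proposition~5.2.3 of~\cite{MT}) if two parallel edges happen to be disjoint rather than joined by corners. The main technical point I expect to verify carefully is uniform perfectness of the L-shapes $A$ and $B$; reducing it to the topological fact that each is connected---which holds by construction---is what makes the recursive application of Lemma~\ref{arcthm} clean.
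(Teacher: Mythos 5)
Your proof is correct and follows essentially the same route as the paper: the same product basis of open rectangles, the same decomposition of $\partial U$ into two L-shaped pieces, and the same double application of Lemma~\ref{arcthm} (the paper's $S$ and $T$ are exactly your two L-shapes). Your explicit verification that connected sets with more than one point are uniformly perfect fills in a step the paper leaves implicit, but the argument is the same.
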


\begin{proof}
The first inequality is due to the fact that $2=\dim_t (\Gamma\times \Lambda)\leq \dim_{tC}(\Gamma\times\Lambda).$  For the second inequality, let $\mathcal{U}=\{\gamma((c,d))\times \lambda((a,b)):a,b,c,d\in[0,1]\}$ where $\gamma,~\lambda$ are the homeomorphisms parametrizing $\Gamma,~\Lambda$ respectively.  Let $A\times B\in\mathcal{U}$ where $A=\gamma(c,d),~B=\lambda(c,d).$ Since $A$ and $B$ are open, the interior of $A\times B$ is again $A\times B,$ so we may write $\partial (A\times B)$ as
\begin{align}
\partial(A\times B)=&\overline{A\times B}\setminus (A\times B) \notag \\
=&(\partial A\times B)\cup (A\times\partial B)\cup (\partial A\times\partial B)  \label{8} \\
=&\left((\overline{A}\times\{\lambda(c)\})\cup (\{\gamma(b)\}\times \overline{B})\right)\cup\left( \overline{A}\times\{\lambda(d)\})\cup (\{\gamma(a)\}\times\overline{B})\right). \notag
\end{align}

Let $S=(\overline{A}\times\{\lambda(c)\})\cup (\{\gamma(b)\}\times \overline{B})$ and $T=(\overline{A}\times\{\lambda(d)\})\cup (\{\gamma(a)\}\times\overline{B}).$  Since each of $S$ and $T$ is a union of two connected sets, Lemma \ref{arcthm} gives 
\begin{equation}
\begin{split}
\dim_C S=&\max\{\dim_C (\overline{A}\times\{\lambda(c)\}),\dim_C  (\{\gamma(b)\}\times \overline{B})\} \\ =& \max\{\dim_C \overline{A},\dim_C \overline{B}\}\label{9}\\
\dim_C T=&\max\{\dim_C (\overline{A}\times\{\lambda(d)\}),\dim_C (\{\gamma(a)\}\times\overline{B})\} \\ = &\max\{\dim_C \overline{A},\dim_C \overline{B}\}
\end{split}
\end{equation}
Since $S$ and $T$ are connected, Lemma \ref{arcthm}, \eqref{8} and \eqref{9} yield
\begin{align*}
\dim_C \partial(A\times B)=&\max\{\dim_C S,\dim_C T\}\\
=&\max\{\dim_C \overline{A},\dim_C \overline{B}\}\\
\leq&\max\{\dim_C \Gamma,\dim_C \Lambda\}.
\end{align*}
The conclusion now follows from the definition of the topological conformal dimension.
\end{proof}

We conclude this section with a simple observation that classifies product spaces consisting of two factors; one factor is the unit interval, the other has conformal dimension zero.

\begin{fact}\label{easy product proposition}
If $\dim_C X=0$ then $\dim_{tC}(X\times [0,1])=1.$
\end{fact}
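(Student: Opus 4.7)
The plan is to establish $1 \leq \dim_{tC}(X \times [0,1]) \leq 1$, using monotonicity for the lower bound and an explicit basis construction for the upper bound.

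For the lower bound, I would fix any $x_0 \in X$ and apply monotonicity under inclusion together with the observation that $\{x_0\} \times [0,1]$ is isometric to $[0,1]$: this yields $\dim_{tC}(X \times [0,1]) \geq \dim_{tC}[0,1]$, and the chain \eqref{f1} forces $\dim_{tC}[0,1] = 1$ because $\dim_t[0,1] = \dim_H[0,1] = 1$.

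For the upper bound, the goal is to exhibit a basis $\mathcal{U}$ of $X \times [0,1]$ with $\dim_C \partial U \leq 0$ for every $U \in \mathcal{U}$. Since $\dim_C X = 0$, inequality \eqref{f2} implies $\dim_t X = 0$, so $X$ admits a basis $\mathcal{V}$ of clopen sets. I would take $\mathcal{U} = \{V \times J : V \in \mathcal{V},\ J \text{ open in } [0,1]\}$. Because $\partial V = \varnothing$, the standard product formula $\partial(A \times B) = (\partial A \times \overline{B}) \cup (\overline{A} \times \partial B)$ collapses to $\partial(V \times J) = V \times \partial J$, which is at most two isometric copies of $V$ separated by a positive distance in the max metric. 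Monotonicity of $\dim_C$ gives $\dim_C V \leq \dim_C X = 0$, and Proposition~5.2.3 of \cite{MT}---the same additivity result already invoked in the proof of Fact~\ref{th1}---then yields $\dim_C(V \times \partial J) = 0$. Hence $\dim_{tC}(X \times [0,1]) \leq 1$.

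The main technical wrinkle is that Proposition~5.2.3 of \cite{MT} is stated for disjoint \emph{compact} pieces, whereas a clopen $V \subset X$ need only be bounded. When $X$ is compact this is no obstacle, since clopen subsets of compact spaces are compact; in general one refines $\mathcal{V}$ to a basis of bounded clopen sets and relies on the fact that the additivity of conformal dimension persists for bounded pieces of positive mutual distance (the target metrics realizing near-optimal Hausdorff dimension can be rescaled and reassembled). This is the only point of the argument that needs care; everything else is formal.
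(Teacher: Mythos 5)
Your proposal is correct and follows essentially the same route as the paper: the lower bound via monotonicity applied to a line segment $\{x_0\}\times[0,1]$, and the upper bound via a basis of products of clopen sets with open intervals, whose boundaries are $V\times\partial J$. The only difference is that you justify $\dim_C(V\times\partial J)=\dim_C V$ explicitly (via Proposition~5.2.3 of \cite{MT} and the separated-bounded-pieces gluing when $V$ is not compact), a step the paper's proof simply asserts; your added care here is warranted and does not change the argument.
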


\begin{proof}
Since $X\times [0,1]$ contains a line segment, $\dim_{tC} (X\times [0,1])\geq 1.$  On the other hand, $X$ has a basis $\mathcal{U}$ of clopen sets since $\dim_t X\leq\dim_C X=0.$  Then for every $U\in\mathcal{U}$ and $0\leq a\leq b\leq 1$ we have
\begin{align*} 
\dim_C \partial(U\times (a,b))&=\dim_C (U\times \partial(a,b))\\
&=\dim_C (U\times\{a,b\})\\
&=\dim_C U\\
&\leq\dim_C X=0.\qedhere
\end{align*}
\end{proof}

\section{Range of Dimension} 

Taking products of Cantor sets, one sees that the Hausdorff dimension attains all values in $[0,\infty]$.  Topological Hausdorff dimension attains all values in $[1,\infty]$ by Theorem~4.24~in~\cite{BBE2}.  It was shown in ~\cite{JT} that conformal dimension attains all values in $[1,\infty],$ and later in \cite{LK} that it does not attain any value in $(0,1)$.  Possible values of the topological conformal dimension are restricted to $\{-1,0,1\}\cup[2,\infty].$  

\begin{fact}
The topological conformal dimension cannot take any value in $(1,2).$
\end{fact}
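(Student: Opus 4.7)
The plan is a short contradiction argument that reduces the topological conformal dimension to a ``gap'' statement for the ordinary conformal dimension, namely the result of Kovalev in \cite{LK} that $\dim_C Y \notin (0,1)$ for every metric space $Y$. The inequality \eqref{f1} says that $\dim_C$ is non-negative on nonempty sets, so the allowed values of $\dim_C$ lie in $\{-1,0\} \cup [1,\infty]$.

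First I would suppose for contradiction that some space $X$ has $\dim_{tC} X = c$ with $1 < c < 2$. Choose $\eps > 0$ small enough that $c - 1 + \eps < 1$, e.g.\ any $\eps < 2 - c$. By the definition of $\dim_{tC}$ as an infimum, there exists a basis $\mathcal{U}$ of $X$ such that $\dim_C \partial U \leq c - 1 + \eps < 1$ for every $U \in \mathcal{U}$.

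Next I apply Kovalev's gap theorem: since $\dim_C \partial U < 1$ for each $U \in \mathcal{U}$, the value must already be at most $0$, hence $\dim_C \partial U \leq 0 = 1 - 1$ for every $U \in \mathcal{U}$. But then the same basis $\mathcal{U}$ witnesses that $\dim_{tC} X \leq 1$, contradicting the assumption $\dim_{tC} X = c > 1$. Therefore no value in $(1,2)$ can be attained.

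The only real content is the invocation of the Kovalev gap; everything else is a direct unpacking of the infimum in the definition \eqref{defc}. There is no genuine obstacle, since no continuity of the infimum is needed: one only picks a single $\eps$ small enough to land the boundary dimensions below $1$, after which the gap automatically pushes them down to $0$.
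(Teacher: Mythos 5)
Your proof is correct and follows essentially the same route as the paper's: both extract from the infimum a basis whose boundaries have conformal dimension strictly below $1$, then invoke Kovalev's gap theorem to push those dimensions down to $0$ and conclude $\dim_{tC}X\leq 1$. The only cosmetic difference is that you parametrize the intermediate value as $c-1+\eps$ while the paper writes it as $d-1$ with $c<d<2$.
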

\begin{proof}
If $X=\varnothing$ then $\dim_{tC}X =-1.$  Let $X$ be a nonempty metric space with $\dim_{tC}X=c$ and suppose $1<c<2.$  By definition of the topological conformal dimension there is a number $d$ and a basis $\mathcal{U}$ for $X$ such that $1<c<d<2$ and $\dim_C\partial U\leq d-1$ for all $U\in\mathcal{U}.$  Then \cite{LK} implies $\dim_C \partial U=0$ for all $U\in\mathcal{U}$ so that $\dim_{tC}X\leq 1,$ a contradiction.
\end{proof}

For tC-dimension, all integer values in $[2,\infty]$ are realized by Euclidean spaces.  It is more involved to obtain examples that attain non-integer values.  Theorem~\ref{ftc2} gives one method for accomplishing this task.  In particular, it shows that there are compact subsets of $\mathbb[0,1]^3$ with fractional topological conformal dimension. 

\begin{definition}A metric space $X$ is \textit{Ahlfors $d$-regular} if it admits a Borel regular measure $\mu$ such that 
\begin{equation}
\frac{1}{K}r^d\leq\mu\left(\overline{B}(x,r)\right)\leq Kr^d
\end{equation}
for some constant $K\geq 1$ for all balls ${B}(x,r)$ of radius $0<r<2\diam X.$
\end{definition}

The primary tool we use to prove Theorem~\ref{ftc2} is Proposition~4.1.3~in~\cite{MT}, so it is included here for sake of completeness.

\begin{proposition}\label{MT prop}\textnormal{(Pansu)}
Let $(Z,d,\mu)$ be a compact, doubling metric measure space, and let $1<p<\infty$ and $p'=\frac{p}{p-1}.$ Suppose that there exists a family $\mathcal{E}$ of connected sets in $Z$ and a probability measure $\nu$ on $\mathcal{E}$ with the following properties:
\begin{enumerate}[(i)]
\item[\textnormal{(i)}] there exists $c>0$ so that $\diam E\geq c$ for all $E\in\mathcal{E},$ and 
\item[\textnormal{(ii)}] there exists $C$ and $r_0>0$ so that 
\begin{equation}\label{eq10}
\nu\{E\in\mathcal{E}:E\cap B(x,r)\neq\varnothing\}\leq C\mu(B(x,r))^{\nicefrac{1}{p'}}
\end{equation}
for all balls $x\in Z$ and for all $r\leq r_0.$
\end{enumerate}
Then $\dim_C Z\geq p.$
\end{proposition}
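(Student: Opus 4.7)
My plan is to establish $\dim_C Z \geq p$ by bounding the $p$-Hausdorff content of every quasisymmetric image of $Z$ from below, using the rich family $\mathcal{E}$ as an obstruction to efficient covers---a combinatorial modulus argument of Pansu type.

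The first step is a connectivity lemma. For any finite cover $\{B_i\}_{i \in I}$ of $Z$ by balls of radii $r_i \leq r_0/2$ and any $E \in \mathcal{E}$, I claim
\begin{equation*}
\sum_{i \,:\, B_i \cap E \neq \varnothing} r_i \;\geq\; \frac{c}{2}.
\end{equation*}
Indeed, since $E$ is connected with $\diam E \geq c$, fix $x, y \in E$ with $d(x,y) \geq c$. The subcollection of balls meeting $E$ then contains a chain $B_{i_1}, \ldots, B_{i_n}$ of mutually overlapping balls linking $x$ to $y$, and so $c \leq d(x,y) \leq \sum_k \diam B_{i_k} \leq 2 \sum_k r_{i_k}$.

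Next I would integrate this estimate against the probability measure $\nu$ and swap the order of summation and integration, so that hypothesis (ii) converts the inequality into
\begin{equation*}
\frac{c}{2} \;\leq\; \sum_i r_i\, \nu\{E \in \mathcal{E} : E \cap B_i \neq \varnothing\} \;\leq\; C \sum_i r_i\, \mu(B_i)^{1/p'}.
\end{equation*}
H\"older's inequality with exponents $p$ and $p'$, combined with a Vitali-type refinement of the cover (available because $Z$ is doubling) that gives bounded overlap and hence $\sum_i \mu(B_i) \lesssim \mu(Z) < \infty$, then produces a uniform bound
\begin{equation*}
\sum_i r_i^p \;\geq\; \kappa > 0
\end{equation*}
independent of the cover. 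This bounds the $p$-Hausdorff content of $(Z,d)$ from below, whence $\dim_H (Z,d) \geq p$.

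To upgrade this Hausdorff bound into a conformal bound, I would rerun the above argument after any quasisymmetric change of metric. The estimate (ii) is essentially a combinatorial $p$-modulus lower bound for $\mathcal{E}$, and combinatorial modulus is a quasisymmetric invariant in doubling spaces up to multiplicative constants. The main obstacle lies precisely in this transport: under a quasisymmetric map $f$, balls become only quasi-round, the measure $\mu$ must be replaced by a pushforward or by Hausdorff measure on the target, and hypothesis (ii) has to be re-verified with new constants while the connectivity lemma is re-run using that $f$ preserves connectedness and maps diameter-$\geq c$ sets to sets of uniformly large diameter on each scale. Once this bookkeeping is carried out, the three combinatorial steps applied in $f(Z)$ yield $\dim_H f(Z) \geq p$ for every quasisymmetric $f$, which is precisely $\dim_C Z \geq p$.
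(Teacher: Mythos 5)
The paper does not prove this proposition at all: it is quoted verbatim from Mackay--Tyson (Proposition~4.1.3 in \cite{MT}, attributed to Pansu) ``for sake of completeness,'' so the comparison is between your argument and the standard one in that reference.

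Your first three steps are sound and are in fact the core of the standard proof: the chain lemma for a connected set of diameter $\geq c$ covered by balls, the Fubini step converting hypothesis (ii) into $\frac{c}{2}\leq C\sum_i r_i\,\mu(B_i)^{1/p'}$, and H\"older plus bounded overlap to get $\sum_i r_i^p\geq\kappa$. This correctly yields $\dim_H(Z,d)\geq p$. The genuine gap is the last step. Re-verifying hypothesis (ii) in the image metric is not possible in general: there is no reason the image $f(Z)$ carries any measure satisfying a bound of the form $\nu\{E: E\cap B\neq\varnothing\}\leq C\,\mu'(B)^{1/p'}$ with the \emph{same} exponent $p'$ (if it did, conformal dimension lower bounds would be nearly tautological), and appealing to quasisymmetric invariance of combinatorial modulus is circular at this level, since that invariance is proved by exactly the argument you are trying to complete. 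The correct move is a reorganization of what you already have, not a transport of (ii): given quasisymmetric $f:Z\to Z'$ and a cover of $Z'$ by sets $E_j'$ of small diameter, pull back and enclose each $f^{-1}(E_j')$ in a ball $B_j\subset Z$ of comparable radius, so that hypothesis (ii) and the measure $\mu$ are used \emph{only in the source}; run the chain lemma with the weights $\rho_j=\diam f(B_j)$ instead of $r_j$, using that $\diam f(E)\geq c'>0$ uniformly for $E\in\mathcal{E}$ (quasisymmetry plus compactness plus $\diam E\geq c$); then Fubini and H\"older give $\sum_j(\diam f(B_j))^p\geq\kappa>0$, and quasisymmetry compares $\diam f(B_j)$ with $\diam E_j'$, so $\mathcal{H}^p_\infty(Z')>0$ and $\dim_H Z'\geq p$. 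In short: the hypotheses stay in $Z$, only the diameters are measured in $Z'$; that asymmetry is the whole point of the proposition, and it is what your sketch is missing.
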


\begin{lemma}\label{fractaltclemma}
If $A\subset \mathbb{C}$ is open, connected, and bounded, then there is a connected set $E\subset\partial A$ such that $\diam E=\diam A.$
\end{lemma}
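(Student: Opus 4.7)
The plan is to realize the diameter on the boundary of the unbounded component of $\mathbb{C}\setminus\overline{A}$. Put $K=\overline{A}$; since $A$ is bounded and connected, $K$ is a compact continuum. By compactness I choose $p,q\in K$ with $|p-q|=\diam K=\diam A$, which I may assume is positive (the empty case is trivial and $A$ nonempty and open forces $\diam A>0$). Openness of $A$ forces $p,q\in\partial A$: if $p\in A$ there would be $\eps>0$ with $B(p,\eps)\subset A$, and then $p+\tfrac{\eps}{2}(p-q)/|p-q|\in A$ would lie at distance $|p-q|+\eps/2$ from $q$, contradicting maximality.

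Next I let $U$ denote the unique unbounded connected component of $\mathbb{C}\setminus K$ and claim that $p,q\in\partial U$. Because $K\subset\overline{B}(q,|p-q|)$, the ray $R_p=\{p+t(p-q)/|p-q|:t>0\}$ is disjoint from $K$; being connected and unbounded it lies in $U$, and since $R_p$ accumulates at $p\in K\subset\mathbb{C}\setminus U$, one obtains $p\in\overline{U}\setminus U=\partial U$. The symmetric ray from $q$ gives $q\in\partial U$. A short check shows $\partial U\subset\partial A$: components of the open set $\mathbb{C}\setminus K$ are open, so any $x\in\partial U$ must lie in $K=\overline{A}$; but $x\in\overline{U}$ forces $x\notin\mathrm{int}(\overline{A})\supset A$, whence $x\in\overline{A}\setminus A=\partial A$.

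Setting $E=\partial U$, I therefore have $E\subset\partial A$ and $|p-q|\leq\diam E\leq\diam\overline{A}=\diam A$, so $\diam E=\diam A$. The remaining and main point is that $E$ must be shown to be connected. For this I would invoke the classical fact from plane topology that for a compact connected set $K\subset S^2$, the boundary of every connected component of $S^2\setminus K$ is connected --- a consequence of the unicoherence of $S^2$; see for instance Newman's \emph{Elements of the Topology of Plane Sets of Points}. Applying it to our $U$ (noting $\partial U$ is the same whether computed in $\mathbb{C}$ or in $S^2$, since $\infty$ lies in the interior of the component containing it) finishes the proof. This invocation of a nontrivial separation-theoretic result is the main obstacle; everything else reduces to elementary openness and compactness manipulations.
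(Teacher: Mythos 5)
Your proof is correct and follows essentially the same route as the paper: both take $E=\partial U$ where $U$ is the unbounded component of $\mathbb{C}\setminus\overline{A}$, both get connectedness of $E$ from the Phragm\'en--Brouwer/unicoherence theorem, and both show $\diam E=\diam A$ by placing the diameter-realizing pair of $\overline{A}$ on $\partial U$ via a ray escaping to infinity. Your direct ray argument (points of $R_p$ lie at distance greater than $\diam\overline{A}$ from $q$) is in fact a cleaner substitute for the paper's construction of the polar ``farthest point'' set $F$ and its convexity computation, but the underlying idea is identical.
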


\begin{proof}
Let $\{S_{\alpha}|\alpha\in J\}$ be the set of connected components of $\overline{A}^c$ and let $\beta\in J$ be such that $S_{\beta}$ is unbounded.  We will show that $E=\partial S_{\beta}$ satisfies the conclusion of the lemma.

The Phragm\'en-Brouwer theorem (Theorem VI.2.1~in~\cite{GW}) shows that $\partial S_{\beta}$ is connected.  Note that $\partial S_{\beta}\subset \partial A.$  Indeed, if $w\in \partial S_{\beta}\setminus \partial A$ then either $w\in A$ or $w\in \overline{A}^c.$  If $w\in \overline{A}^c$ then $w\in S_{\gamma}$ for some $\gamma\neq \beta.$  Since $\overline{S_{\beta}}$ is connected and $\partial S_{\beta}\subset \overline{S_{\beta}},$ we have $S_{\beta}\subset S_{\gamma},$ a contradiction.  If $w\in A$ then $S_{\beta}\subset\overline{S_{\beta}}\subset A$, a contradiction.  

For $z\in\mathbb{C}$ let $\text{Arg}(z)$ be the principal argument of $z.$  Without loss of generality, let $0\in A$ and choose an argument $-\pi<\phi\leq\pi$ and a point $z\in\mathbb{C}$ with $|z|>\diam\overline{A}$ and $\text{Arg}(z)=\phi.$  Connect $0$ to $z$ by a line segment; call it $L.$  The function $w\mapsto |w|$ is continuous on the compact set $L\cap\overline A,$ so it attains a maximum on $L\cap\overline{A},$ say at $z_{\phi}.$  

Put $F=\{z_{\phi}:-\pi<\phi\leq\pi\}.$  We claim that $\diam F\geq\diam A$.  To see this, choose $a,b\in\partial A$ such that $|a-b|=\diam A.$  We will show that $a,b\in F.$  Since $0\in A$ we have $|a-b|>|a|$ and $|a-b|>|b|.$  Note that the function $f(t)=|ta-b|$ is strictly convex on $[0,\infty)$ and $f(0)=|b|<|a-b|=f(1).$  Then $f$ is strictly increasing on $[1,\infty)$ so that $ta\notin A$ for all $t>1.$  It follows that $a\in F$ and similarly $b\in F.$  Clearly  $F\subset(\partial A)\cap(\partial S_{\beta})$.  Therefore $\diam \partial S_{\beta}\geq \diam F\geq\diam A.$  Since $\partial S_{\beta}\subset \overline{A}$ we see that $\diam \partial S_{\beta}\leq\diam\overline{A}=\diam A,$ so $\diam E=\diam A$ with $E=\partial S_{\beta}.$
\end{proof}

The following theorem is the main result of this section.  Recall that $f:X\rightarrow Y$ is $L-$bi-Lipschitz if both $f$ and $f^{-1}$ are $L-$Lipschitz.  We write $\dim_A X$ for the Assouad dimension of $X,$ see \cite{JH},\cite{MT}.
\begin{theorem}\label{ftc2}
Let $(X,d,\mu)$ be a compact, doubling metric measure space with $\dim_A X=D<\infty.$  Suppose that there exists a family $\mathcal{Q}$ of surfaces with the following properties:
\begin{enumerate}[(i)] 
\item[\textnormal{(i)}] There is a family $\{f_{\alpha}:\alpha\in J\}$ of $L-$bi-Lipschitz maps such that $Q_{\alpha}=f_{\alpha}([0,1]^2)$ for each $Q_{\alpha}\in\mathcal{Q}.$
\item[\textnormal{(ii)}] There exists a measure $\nu_0$ on $\mathcal{Q}$, a constant $C_0,$ and $r_0>0$ so that 
\begin{equation}\label{eqE}
0<\nu_0\{Q_{\alpha}\in\mathcal{Q}:Q_{\alpha}\cap B(x,r)\neq\emptyset\}\leq C_0 r^d
\end{equation}
\noindent for all $x\in X$ and for all $r\leq r_0.$
\item[\textnormal{(iii)}] The union of edges $\bigcup_{\alpha\in J}f_{\alpha}\left(\partial([0,1]^2)\right)$ is not dense in $X.$
\end{enumerate}
Then $\dim_{tC} X\geq 1+\frac{D}{D-d}.$
\end{theorem}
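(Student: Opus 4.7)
My plan is to argue by contradiction and apply Proposition~\ref{MT prop} (Pansu) to the boundary of a single, cleverly chosen basis element. Suppose $\dim_{tC}X<1+\frac{D}{D-d}$, so there is a basis $\mathcal U$ of $X$ and some $c<\frac{D}{D-d}$ with $\dim_C\partial U\leq c$ for every $U\in\mathcal U$. By hypothesis~(iii) the open set $X\setminus\overline{\bigcup_\alpha f_\alpha(\partial([0,1]^2))}$ is nonempty; pick $y_0$ in it and $\epsilon\in(0,r_0)$ with $\overline{B(y_0,\epsilon)}$ still avoiding all edges. Since $\mathcal U$ is a basis, I would choose $U\in\mathcal U$ with $y_0\in U\subset B(y_0,\epsilon)$, and then $\rho>0$ with $B(y_0,\rho)\subset U$.

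Next I would construct the family of connected subsets of $\partial U$ that will feed Pansu's criterion. For every $\alpha\in J$ with $Q_\alpha\cap B(y_0,\rho/2)\neq\emptyset$, pick a point $x_\alpha$ in that intersection; then $B(x_\alpha,\rho/2)\subset B(y_0,\rho)\subset U$. Since $\overline U$ is disjoint from every edge, $V_\alpha:=f_\alpha^{-1}(U\cap Q_\alpha)$ is an open subset of $(0,1)^2$, and by $L$-Lipschitz continuity of $f_\alpha^{-1}$ the connected component $A_\alpha$ of $V_\alpha$ containing $f_\alpha^{-1}(x_\alpha)$ contains a Euclidean ball of radius $\rho/(2L)$, so $\diam A_\alpha\geq\rho/L$. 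Applying Lemma~\ref{fractaltclemma} to $A_\alpha\subset\mathbb C$ produces a connected set $F_\alpha\subset\partial A_\alpha$ with $\diam F_\alpha=\diam A_\alpha$; since $\partial A_\alpha\subset\partial V_\alpha\subset f_\alpha^{-1}(\partial U)$, the image $E_\alpha:=f_\alpha(F_\alpha)\subset\partial U$ is connected with $\diam E_\alpha\geq \rho/L^2$.

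The third step is to verify Pansu's hypotheses on $Z=\partial U$ with exponent $p=D/(D-d)$, so $p'=D/d$. Take $\mathcal E=\{E_\alpha\}$ with the probability measure $\nu$ obtained by normalizing $\nu_0$ on the set $S:=\{Q_\alpha:Q_\alpha\cap B(y_0,\rho/2)\neq\emptyset\}$, which has positive $\nu_0$-measure by~(ii). Since each $E_\alpha\subset Q_\alpha$, the upper bound in~(ii) yields
\[
\nu\{E\in\mathcal E:E\cap B(x,r)\neq\emptyset\}\leq \nu_0(S)^{-1}\,C_0\, r^d\qquad(x\in\partial U,\ r\leq r_0).
\]
Combined with a doubling measure $\tilde\mu$ on $\partial U$ satisfying $\tilde\mu(B(x,r))\gtrsim r^D$, this becomes the Pansu bound $\nu(\cdot)\leq C'\tilde\mu(B(x,r))^{1/p'}$, so Proposition~\ref{MT prop} delivers $\dim_C\partial U\geq D/(D-d)$, contradicting the standing assumption on $\mathcal U$.

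The principal obstacle I anticipate is the production of such a measure $\tilde\mu$. A generic doubling measure on $\partial U$ only gives $\tilde\mu(B(x,r))\gtrsim r^s$ with $s$ at least its doubling dimension, and monotonicity of Assouad dimension only furnishes $s\geq\dim_A\partial U$, which is the wrong direction of inequality. Genuinely using the hypothesis $\dim_A X=D$ will probably involve pushing the ambient doubling measure $\mu$ on $X$ down to $\partial U$, perhaps via a slicing argument powered by the family $\mathcal Q$ itself, certifying sufficient ``surface thickness'' of $\partial U$ near each of its points.
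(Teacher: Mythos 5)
Your construction up to the final step is, in essence, the paper's own proof: you isolate a basis element $U$ whose closure avoids every edge, pull each component of $U\cap Q_\alpha$ back to the plane, invoke Lemma~\ref{fractaltclemma} to extract a connected subset of its boundary of full diameter, push it forward into $\partial_X U$, verify a uniform lower bound on diameters, and feed the normalized $\nu_0$ into Proposition~\ref{MT prop}. All of that is correct and matches the paper (the paper obtains the diameter bound slightly differently, from the fact that $\overline{B_\alpha}$ is connected and meets both a compact set $K\subset U$ and $\partial_X U$, so $\diam B_\alpha\geq\dist(K,\partial_X U)$, but your ball argument serves the same purpose).

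The step you flag as the principal obstacle is indeed the one place your write-up is incomplete, but your diagnosis is off in two respects. First, the monotonicity $\dim_A\partial_X U\leq\dim_A X=D$ is the \emph{right} direction, not the wrong one: the Vol'berg--Konyagin/Luukkainen--Saksman theorem (the Corollary to Theorem~1 in \cite{LS}, combined with Theorem~13.5 in \cite{JH}) provides, for every $s>\dim_A\partial_X U$ --- hence for every $s>D$ --- a $(t_0,s)$-homogeneous doubling measure $\mu_0$ on $\partial_X U$, and applying homogeneity with $\lambda=\diam(\partial_X U)/r$ yields $\mu_0(B(x,r))\geq Mr^s$. Second, the exact bound $\mu_0(B(x,r))\gtrsim r^D$ you ask for is neither attainable in general nor needed: run your argument with $p=\frac{s}{s-d}$ and $p'=\frac{s}{d}$ for an arbitrary $s>D$, conclude $\dim_C\partial_X U\geq\frac{s}{s-d}$, and then let $s\downarrow D$. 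No pushforward or slicing of the ambient measure $\mu$ is involved; beyond the hypothesis $\dim_A X=D$, the ambient measure plays no further role.
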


\begin{corollary}\label{ftc3}
Let $(C,\nu_C)$ be a compact Ahlfors $d-$regular metric measure space.  If $X=C\times[0,1]^2$ then $2+\nicefrac{d}{2}\leq\dim_{tC}X\leq 2+d.$ 
\end{corollary}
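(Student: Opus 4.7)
The plan is to obtain the upper bound from Ahlfors regularity of the product via \eqref{f1}, and the lower bound by applying Theorem~\ref{ftc2} to the obvious family of horizontal square slices $\{c\}\times[0,1]^2$. The first step is essentially immediate; the second requires only verifying the three hypotheses.

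For the upper bound, the product measure $\mu = \nu_C \times \mathrm{Leb}$ endows $X$ with an Ahlfors $(d+2)$-regular structure in the max metric: the ball $B((x,p), r)$ is precisely $B_C(x, r) \times B_{[0,1]^2}(p, r)$, which has $\mu$-measure comparable to $r^d \cdot r^2 = r^{d+2}$ for small $r$. Therefore $\dim_H X = d + 2$, and \eqref{f1} gives $\dim_{tC} X \leq \dim_H X = 2 + d$.

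For the lower bound, set $\mathcal{Q} = \{Q_c : c \in C\}$ with $Q_c = \{c\}\times[0,1]^2$, and for each $c \in C$ let $f_c \colon [0,1]^2 \to X$ be the isometric embedding $(s,t) \mapsto (c, s, t)$. Condition (i) of Theorem~\ref{ftc2} holds because each $f_c$ is $1$-bi-Lipschitz. For (ii), identify $\mathcal{Q}$ with $C$ via $c \mapsto Q_c$ and let $\nu_0$ be the pullback of $\nu_C$; in the max metric, a surface $Q_c$ meets the ball $B((x,p), r) = B_C(x, r) \times B_{[0,1]^2}(p, r)$ precisely when $c \in B_C(x, r)$, so Ahlfors $d$-regularity of $\nu_C$ gives both $\nu_0\{Q_c : Q_c \cap B((x,p), r) \neq \varnothing\} \leq K r^d$ and strict positivity, for all $r$ smaller than a suitable $r_0$. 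Condition (iii) is clear, since $\bigcup_{c \in C} f_c(\partial[0,1]^2) = C\times\partial([0,1]^2)$ omits every point $(c, p)$ with $p$ in the open unit square. Since $X$ is compact, doubling, and Ahlfors $(d+2)$-regular, so that $\dim_A X = d+2$, Theorem~\ref{ftc2} applies with $D = d + 2$ and yields $\dim_{tC} X \geq 1 + \frac{D}{D - d} = 1 + \frac{d+2}{2} = 2 + \nicefrac{d}{2}$. The one place that requires care is matching the exponent $d$ appearing in \eqref{eqE} to the Ahlfors regularity exponent of $C$; once this matching is recognized, the rest is routine and the algebraic computation of $1 + D/(D-d)$ forces the lower bound into the stated form.
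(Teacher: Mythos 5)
Your proposal is correct and follows essentially the same route as the paper: the same family of vertical slices $\{c\}\times[0,1]^2$ with isometric (hence $1$-bi-Lipschitz) parametrizations, the same measure $\nu_0$ obtained by transporting $\nu_C$ along $c\mapsto Q_c$ (the paper phrases it via the projection $\pi_1$), the same use of Ahlfors $(2+d)$-regularity of the product to get $\dim_A X=\dim_H X=2+d$, and the same algebra $1+\tfrac{D}{D-d}=2+\nicefrac{d}{2}$. Your explicit verification of the strict positivity in (ii) and of condition (iii) is slightly more careful than the paper's, but the argument is the same.
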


\begin{proof}[Proof of Theorem \ref{ftc2}]
Let $\mathcal{U}$ be a basis for $X$.  For each $\alpha$, call $f_{\alpha}(\partial_{\mathbb{C}}([0,1]^2))$ the \textit{edge} of $Q_{\alpha}.$  In this proof, $\partial G$ means the boundary of $G$ in $Q_{\alpha}.$  Boundaries in other spaces are indicated by subscripts.  Assumption (iii) shows that there is an open set $U'\subset X$ that does not meet the edge of $Q_{\alpha}$ for any $\alpha.$  Choose a nonempty $U\in\mathcal{U}$ such that $\overline U\subset U'.$  We will show that $\dim_C \partial_X U\geq \frac{D}{D-d}$ using Proposition \ref{MT prop}. 

Choose a compact set $K\subset U$ with nonempty interior, and for each $\alpha$ with $K\cap Q_{\alpha}\neq\varnothing$ choose $z_{\alpha}\in K\cap Q_{\alpha}.$  Put $J_K=\{\alpha\in J:K\cap Q_{\alpha}\neq\varnothing\}.$  Let $B_{\alpha}$ be the connected component of $U\cap Q_{\alpha}$ containing $z_{\alpha}.$  Consider $A_{\alpha}=f_{\alpha}^{-1}(B_{\alpha})$.  Note that $A_{\alpha}$ is open, connected, and does not intersect $\partial_{\mathbb{C}}([0,1]^2),$ so Corollary \ref{fractaltclemma} gives a connected set $E_{\alpha}\subset\partial_{\mathbb{C}} A_{\alpha}$ with $\diam E_{\alpha}=\diam A_{\alpha}.$  Since $f_{\alpha}$ is a homeomorphism, $f_{\alpha}(E_{\alpha})$ is connected.  

To satisfy the first part of condition (i) of Proposition \ref{MT prop}, define  $\mathcal{E}=\{f_{\alpha}(E_{\alpha}):\alpha\in J_K\}.$  We need to show that $f_{\alpha}(E_{\alpha})\subset\partial_X U.$  Since $B_{\alpha}$ is a component of $U\cap Q_{\alpha}$ we have $\partial B_{\alpha}\subset \partial(U\cap Q_{\alpha}).$  Then
\begin{equation}
\begin{split}
f_{\alpha}(E_{\alpha})\subset f_{\alpha}(\partial_{\mathbb{C}} A_{\alpha})&=\partial f_{\alpha}(A_{\alpha})=\partial B_{\alpha}\subset \partial(U\cap Q_{\alpha})\subset\partial_X U.
\end{split}
\end{equation}

To satisfy \eqref{eq10} we must show that there is $c>0$ such that $\diam f_{\alpha}(E_{\alpha})\geq c$ for all $\alpha\in J_K$.  We have
\begin{equation}
\begin{split}\label{eq13}
\diam f_{\alpha}(E_{\alpha})&\geq \frac{1}{L}\diam E_{\alpha}\\
&=\frac{1}{L}\diam A_{\alpha}\\
&\geq \frac{1}{L^2}\diam B_{\alpha}\\
\end{split}
\end{equation}
for all $\alpha\in J_K,$ so it suffices to show that there is $c'>0$ with $\diam(B_{\alpha})\geq c'$ for all $\alpha\in J_K.$  Since $\overline{B_{\alpha}}$ is connected and meets both $K$ and $\partial_X U,$ we see that $\diam B_{\alpha}\geq\dist(K,\partial_XU)=c'>0$.  Then \eqref{eq13} yields 
\begin{equation}
\begin{split}\label{eq14}
\diam f_{\alpha}(E_{\alpha})\geq \frac{1}{L^2}\diam (B_\alpha)\geq \frac{c'}{L^2}
\end{split}
\end{equation}
for all $\alpha\in J_K,$ so condition (i) of Proposition \ref{MT prop} is satisfied with $c=\nicefrac{c'}{L^2}$.

To establish \eqref{eq10} we let $s>D$ and show that \eqref{eq10} holds with $p=\frac{s}{s-d}$ and $p'=\frac{s}{d}.$  Put $r_0=\diam\partial_X U.$  We need to use an existence theorem for doubling measures proved by Vol'berg and Konyagin, and extended by Luukainen and Saksman~\cite{LS}.  Since $s>D=\dim_A X\geq\dim_A\partial_X U,$ the Corollary to Theorem 1 in \cite{LS} together with Theorem 13.5 in \cite{JH} show that $\partial_X U$ carries a $(t_0,s)-$homogeneous measure for some $t_0.$  So there is a doubling measure $\mu_0$ on $\partial_X U$ satisfying 
\begin{equation}\label{eq15}
\mu_0(B(x,\lambda r))\leq t_0\lambda^s\mu_0(B(x,r))
\end{equation}
for all $x\in \partial_X U,~r>0,$ and $\lambda\geq 1.$  Fix such a ball $B(x,r).$  With $\lambda=\frac{\diam\partial_X U}{r},$ inequality \eqref{eq15} gives
\begin{equation}\label{eq16}
\mu_0(B(x,r))\geq\frac{\mu_0(\partial_X U)}{t_0(\diam\partial_X U)^s}r^s=Mr^s
\end{equation}
so that
\begin{equation}\label{eq17}
\mu_0(B(x,r))^{\nicefrac{1}{p'}}\geq M^{\nicefrac{1}{p'}}r^d.
\end{equation}

To finish establishing \eqref{eq10} we need to define a measure $\nu$ on $\mathcal{E}.$  Let $F_{\alpha}=f_{\alpha}(E_{\alpha}),$ and for $F\subset \mathcal{E}$ define 
\begin{equation}\label{eq18}
\nu(F)=K_0\nu_0\{Q_{\alpha}:F_{\alpha}\in F\},
\end{equation}
where $K_0$ is such that $\nu(\mathcal{E})=1.$  Note that $F_{\alpha}\cap B(x,r)\neq\varnothing$ implies $Q_{\alpha}\cap B(x,r)\neq\varnothing.$  Coupled with \eqref{eq18} and \eqref{eqE}, this yields
\begin{equation}\label{eq19}
\begin{split}
\nu\{F_{\alpha}:F_{\alpha}\cap B(x,r)\neq\varnothing\}&\leq\nu\{F_{\alpha}:Q_{\alpha}\cap B(x,r)\neq\varnothing\}\\
&=K_0\nu_0\{Q_{\alpha}:Q_{\alpha}\cap B(x,r)\neq\varnothing\}\\
&\leq K_0 C_0 r^d.
\end{split}
\end{equation}
Now \eqref{eq19} and \eqref{eq17} give
\begin{equation}\label{eq20}
\begin{split}
\nu\{F_{\alpha}:F_{\alpha}\cap B(x,r)\neq\varnothing\}&\leq K_0 C_0 r^d\\
&\leq \frac{K_0 C_0}{M^{\nicefrac{1}{p'}}}\mu_0(B(x,r))^{\nicefrac{1}{p'}}\\
&=C\mu_0(B(x,r))^{\nicefrac{1}{p'}}.
\end{split}
\end{equation}

Since the choice of $B(x,r)$ was arbitrary, \eqref{eq20} holds for all such balls, so we have \eqref{eq10}.  Therefore $\dim_C \partial_X U\geq p=\frac{s}{s-d}$ for all $s>D.$  Letting $s\rightarrow D$ gives $\dim_C \partial_X U\geq \frac{D}{D-d}$ and hence $\dim_{tC}X\geq 1+\frac{D}{D-d}.$
\end{proof}

\begin{proof}[Proof of Corollary \ref{ftc3}]
The family of surfaces $\mathcal{Q}=\{\{x\}\times [0,1]^2: x\in C\}$ satifies conditions (i) and (iii) of Theorem \ref{ftc2}.  It remains to show that condition (ii) holds.  
 
Let $d=\dim_H C$ and let $\pi_1:X\rightarrow C$ be the projection of $X$ onto $C.$  For $F\subset \mathcal{Q}$ define
\begin{equation}\label{eq21}
\nu_0(F)=\nu_C\left(\bigcup_{Q\in F} \pi_1(Q)\right).
\end{equation}
By Ahlfors regularity, there is a constant $N$ such that $\nu_C(\pi_1(B(x,r)))\leq Nr^d$ for all $z\in X$ and $0<r\leq\diam X.$  It now follows from \eqref{eq21} that 
\begin{equation}\label{eq22}
\begin{split}
\nu_0\{Q\in\mathcal{Q}: Q\cap B(z,r)\neq\varnothing\}&=\nu_C\left(\bigcup_{E\cap B(z,r)\neq\varnothing}\pi_1(Q)\right)\\
&\leq \nu_C(\pi_1(B(z,r)))\\
&\leq Nr^d,
\end{split}
\end{equation}
which is condition (ii).  Since $X$ is Ahlfors $D-$regular with $D=2+d$ we have
\begin{equation}\label{eq23}
\dim_A(C\times [0,1]^2)=\dim_H (C\times [0,1]^2)=2+d=D.
\end{equation}
Equation \eqref{eq23} and Theorem \ref{ftc2} yield
\begin{equation*}
2+\nicefrac{d}{2}\leq\dim_{tC}X\leq 2+d. \qedhere
\end{equation*}
\end{proof}

\begin{corollary}\label{tc not additive}
Topological conformal dimension is not additive under products.  Moreover, this quantity can be fractional.
\end{corollary}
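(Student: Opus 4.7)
The plan is to deduce both assertions at once from Corollary \ref{ftc3}, by plugging in a carefully chosen Cantor factor. First, I fix any $d\in(0,1)$ and let $C$ be a standard self-similar Ahlfors $d$-regular Cantor set in $[0,1]$ (for example, the attractor of an iterated function system satisfying the open set condition whose similarity dimension equals $d$, equipped with its natural Hausdorff $d$-measure). The test space will be $X=C\times[0,1]^2$.

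To see non-additivity, I compute the two sides of the putative product rule separately. Since $C$ is totally disconnected, it has a basis of clopen sets, so $\dim_t C=0$, and then the equivalence \eqref{00} forces $\dim_{tC}C=0$. Because $[0,1]^2$ is a smooth $2$-dimensional manifold, the proposition in Section~2 computing $\dim_{tC}$ for open subsets of $\mathbb{R}^d$ and smooth manifolds gives $\dim_{tC}[0,1]^2=2$. Hence $\dim_{tC}C+\dim_{tC}[0,1]^2=2$. On the other hand, Corollary \ref{ftc3} supplies $\dim_{tC}X\geq 2+d/2>2$. The strict inequality $\dim_{tC}(C\times[0,1]^2)>\dim_{tC}C+\dim_{tC}[0,1]^2$ is exactly the failure of additivity.

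For the fractional values assertion, I note that Corollary \ref{ftc3} confines $\dim_{tC}X$ to the interval $[2+d/2,\,2+d]$, which sits strictly inside $(2,3)$ whenever $0<d<1$. Any value in such an interval is non-integer, so $\dim_{tC}X$ is fractional, which is the last sentence of the corollary. As a bonus, letting $d$ range over $(0,1)$ exhibits a continuum's worth of such examples, matching the discussion following the statement of Theorem \ref{ftc2} in the introduction.

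I do not foresee any real obstacle: essentially all of the work has already been done in Corollary \ref{ftc3}, and the only remaining content is to match its hypothesis (Ahlfors $d$-regularity of the Cantor factor) against a classical construction, and then to contrast with the trivially computed topological conformal dimensions of the two factors. The single point worth double-checking is simply that the chosen self-similar Cantor set is genuinely Ahlfors $d$-regular, which is standard under the open set condition.
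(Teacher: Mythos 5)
Your proposal is correct and follows essentially the same route as the paper: the paper also deduces both claims from Corollary \ref{ftc3} applied to $C\times[0,1]^2$, using $\dim_{tC}C=0$ for the totally disconnected factor and the bracket $2+\nicefrac{d}{2}\leq\dim_{tC}(C\times[0,1]^2)\leq 2+d<3$ to get both non-additivity and a non-integer value. The only cosmetic difference is that the paper fixes $C$ to be the middle-thirds Cantor set with $d=\ln(2)/\ln(3)$, whereas you allow any self-similar Ahlfors $d$-regular Cantor set with $0<d<1$.
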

\begin{proof}
Letting $C$ be the middle-thirds Cantor set, the hypotheses of Corollary \ref{ftc3} are satisfied with $d=\frac{\ln(2)}{\ln(3)}$.  Since $C$ is totally disconnected, $\dim_{tC}C=0,$ so 
\begin{equation}\label{eq24}
\dim_{tC} C+\dim_{tC} ([0,1]^2)=2< 2+\nicefrac{d}{2}\leq\dim_{tC}(C\times [0,1]^2).
\end{equation}
Also $\dim_{tC} (C\times [0,1]^2)\leq 2+d<3,$ so \eqref{eq24} shows that  $\dim_{tC}(C\times [0,1]^2)$ is not an integer.
\end{proof}

Theorem \ref{ftc2} provides hope for an affirmative answer to the following conjecture.

\begin{conjecture}
For every $d\in[2,\infty]$ there is a metric space $X$ with $\dim_{tC}X=d.$
\end{conjecture}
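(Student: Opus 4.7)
The plan is to split $[2,\infty]$ into three parts: integer values, fractional values in intervals $(n,n+1)$ for integer $n \geq 2$, and the point $\infty$. Integer values are already realized by the Euclidean spaces $\mathbb R^n$ via the elementary upper bound $\dim_{tC}\mathbb R^n = n$ recorded earlier, and $\infty$ is realized by, e.g., the Hilbert cube, so the real content of the conjecture is in the fractional case.

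For $d \in (n,n+1)$, I propose extending Corollary \ref{ftc3}. For each $t \in (0,1)$ and integer $n \geq 2$, let $C_t$ be a compact Ahlfors $t$-regular metric space (for example, a generalized Cantor set with suitable contraction ratios) and consider $X_{n,t} = C_t \times [0,1]^n$. The goal is to upgrade Theorem \ref{ftc2} from $2$-dimensional surfaces to $n$-dimensional cubes: hypothesis (i) is restated with $[0,1]^n$ in place of $[0,1]^2$, and Lemma \ref{fractaltclemma} is replaced by a higher-dimensional analogue producing a connected subset $E_\alpha\subset\partial_{\mathbb R^n} A_\alpha$ of diameter comparable to $\diam A_\alpha$. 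Running the Pansu-type Proposition \ref{MT prop} against the resulting family then produces a lower bound of the form $n + t/\textrm{const}$, while the natural product basis furnishes the upper bound $n + t$. Letting $n$ and $t$ vary would sweep out dense subsets of every interval $(n,n+1)$.

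The main obstacle has two faces. The first is that Lemma \ref{fractaltclemma} relies on the Phragm\'en--Brouwer theorem in the plane; its extension to $\mathbb R^n$ demands a different tool, most naturally an Alexander-duality argument showing that some component of $\partial_{\mathbb R^n}A_\alpha$ has diameter at least $\diam A_\alpha$. The second, and far deeper, issue is that the strategy sandwiches $\dim_{tC}X_{n,t}$ between two genuinely different numbers rather than pinning down a single value. Closing this gap is the crux. The direct route is to tighten Theorem \ref{ftc2}: in the proof, $E_\alpha$ is only a single boundary component of $A_\alpha$, and discarding the rest of $\partial A_\alpha$ costs an essential factor. A stronger concentration estimate using the full boundary of each $A_\alpha$ should improve the Pansu exponent from $p = D/(D-d)$ to $p = D - d + 1$ and match the upper bound. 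If such a quantitative improvement resists, a fallback is to establish continuity of $\dim_{tC}$ under snowflake deformations of the Cantor factor and combine a density-of-values argument with a limit-stability result to hit every prescribed $d$. Historically, sharp Pansu-type lower bounds have been the hardest step in conformal-dimension theory, and I expect the same here.
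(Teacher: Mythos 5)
This statement is left as an open conjecture in the paper --- there is no proof of it there, and your proposal does not close that gap either, as you yourself acknowledge. The paper's actual contribution (Theorem \ref{ftc2} and Corollary \ref{ftc3}) is precisely the sandwich $2+\nicefrac{d}{2}\leq\dim_{tC}(C\times[0,1]^2)\leq 2+d$, which shows that \emph{some} fractional values in $(2,3)$ are attained without identifying which ones; your plan reproduces and extends this strategy but inherits exactly the same defect. The crux you name --- pinning down a single value rather than an interval --- is the entire content of the conjecture, and neither of your proposed routes around it is substantiated. The suggested improvement of the Pansu exponent from $\frac{D}{D-d}$ to $D-d+1$ is not something that falls out of ``using the full boundary of each $A_\alpha$'': in Proposition \ref{MT prop} the exponent is dictated by the balance between the transversal measure $\nu$ on the family $\mathcal{E}$ and the doubling measure $\mu_0$ on $\partial_X U$ (via the choice $p'=\nicefrac{s}{d}$ in \eqref{eq15}--\eqref{eq17}), not by how large a connected piece of each leaf's boundary one retains; enlarging $E_\alpha$ does not change either measure and so cannot change $p$. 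The fallback via ``continuity of $\dim_{tC}$ under snowflake deformations'' plus ``limit-stability'' invokes two results that are nowhere established and are doubtful on their face --- the paper's own Fact \ref{effect of snowflake on tH} shows that snowflaking distorts the related tH-dimension by an affine law, and no form of limit-stability for $\dim_{tC}$ appears anywhere.

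Two smaller points. First, even granting an exact formula $\dim_{tC}(C_t\times[0,1]^n)=n+t$, you would in fact obtain \emph{all} of $(n,n+1)$, not merely a dense subset, since compact Ahlfors $t$-regular sets exist for every $t\in(0,1)$; the ``density plus stability'' step would then be unnecessary --- its presence suggests you do not expect to prove the exact formula, which is the honest position. Second, the higher-dimensional replacement for Lemma \ref{fractaltclemma} (finding a large connected subset of $\partial_{\mathbb{R}^n}A_\alpha$) is a real but secondary obstacle; an Alexander-duality or unbounded-complementary-component argument is plausible there. In summary: your proposal is a reasonable research program aligned with what the paper actually proves, but it is not a proof of the conjecture, and the one concrete mechanism you offer for closing the gap (the exponent upgrade) does not work as described.
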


\section{Comparison with Other Dimensions}

In this section, examples are given to demonstrate that topological conformal dimension is different from topological dimension, conformal dimension, topological Hausdorff dimension, and Hausdorff dimension.  It is also shown that conformal dimension and tH-dimension are not comparable.  We compute the tC-dimensions of some classical fractals.  In particular, this type of dimension classifies the Sierpinski carpet as dimension 1, unlike the Hausdorff, topological Hausdorff, and conformal dimensions.  

Due to the additivity of Hausdorff dimension under products and to the fact that  topological dimension only assumes integer values, the following fact is readily seen by considering Corollary \ref{tc not additive}.

\begin{fact}
Topological conformal dimension is different from topological dimension and Hausdorff dimension.
\end{fact}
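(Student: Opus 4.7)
The plan is to use the space $X = C \times [0,1]^2$ furnished by Corollary \ref{tc not additive}, with $C$ the middle-thirds Cantor set, as a single source of counterexamples against both $\dim_t$ and $\dim_H$. The whole proof will be a short deduction from that corollary together with two basic facts: topological dimension is integer-valued, and Hausdorff dimension is additive under products of nice factors.

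First I would dispose of the comparison with topological dimension. Corollary \ref{tc not additive} yields $\dim_{tC} X \in (2,3)$ and asserts this value is not an integer. Since the topological dimension of a separable metric space takes values in $\{-1,0,1,2,\dots\}\cup\{\infty\}$, this alone forces $\dim_{tC} X \neq \dim_t X$, and no further estimate is needed.

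Next I would handle the comparison with Hausdorff dimension in line with the remark preceding the statement. The Ahlfors regular factors $C$ and $[0,1]^2$ give, by the product formula for Hausdorff dimension, $\dim_H(C\times [0,1]^2) = \dim_H C + \dim_H [0,1]^2 = \tfrac{\log 2}{\log 3}+2$. If $\dim_{tC}$ agreed with $\dim_H$ as a function on metric spaces, then $\dim_{tC}$ would inherit this additivity on the pair $(C,[0,1]^2)$; but Corollary \ref{tc not additive} shows precisely that $\dim_{tC} C + \dim_{tC}[0,1]^2 = 2 < \dim_{tC}(C\times[0,1]^2)$, contradicting such additivity. (A shorter route, if desired, is to observe directly that $\dim_{tC} C = 0$ by \eqref{00} while $\dim_H C = \log 2/\log 3 > 0$, so the two dimensions already disagree on $C$.)

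There is no real obstacle here: all of the difficult content is packaged into Corollary \ref{tc not additive}, and what remains is only to notice that integer-valuedness of $\dim_t$ and product additivity of $\dim_H$ are both properties that $\dim_{tC}$ fails on this one example.
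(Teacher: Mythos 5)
Your argument is correct and is essentially the paper's own proof: the paper likewise derives the Fact from Corollary \ref{tc not additive}, citing the integer-valuedness of topological dimension and the product additivity of Hausdorff dimension on the example $C\times[0,1]^2$. Your parenthetical shortcut (that $\dim_{tC}C=0\neq \dim_H C$ already separates $\dim_{tC}$ from $\dim_H$) is a valid, even simpler, observation but does not change the substance.
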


Write $SG$ for the Sierpinski gasket and $SC$ for the Sierpinski carpet.  Laakso proved that $\dim_C SG=1$~\cite{TW}, while the value $\dim_C SC$ remains unknown~\cite{MT}.

\begin{example}\label{tc of carpet}
$\dim_{tC}SG=\dim_{tC}SC=1.$
\end{example}
\begin{proof}
Since $SG$ contains a line segment, $\dim_{tC}SG\geq 1,$ and we have $\dim_{tC}SG\leq\dim_H SG=\frac{\ln(3)}{\ln(2)}<2.$  Since $\dim_{tC}SG\notin (1,2),$ it follows that $\dim_{tC}SG=1.$

Similarly, since $SC$ contains a line segment, $\dim_{tC}SC\geq 1$.  On the other hand, $\dim_{tC}SC\leq\dim_H SC=\frac{\ln(8)}{\ln(3)}<2$ so that $\dim_{tC}SC=1.$
\end{proof}

In light of Example \ref{tc of carpet}, we observe the following fact.  

\begin{fact}
Topological conformal dimension is different from conformal dimension and topological Hausdorff dimension.
\end{fact}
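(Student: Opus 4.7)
The plan is to use a single space, the Sierpinski carpet $SC$ of Example \ref{tc of carpet}, to separate $\dim_{tC}$ from both $\dim_C$ and $\dim_{tH}$ simultaneously. Example \ref{tc of carpet} already supplies $\dim_{tC}(SC) = 1$, so it suffices to verify the strict inequalities $\dim_C(SC) > 1$ and $\dim_{tH}(SC) > 1$.

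For the first inequality I would apply Pansu's criterion, Proposition \ref{MT prop}, to the family $\mathcal{E}$ of vertical slices of $SC$ parametrised by points of the horizontal projection $\pi_1(SC)$. This projection is Ahlfors $d$-regular with $d = \log 2/\log 3$, so its self-similar measure pushes forward to a probability measure on $\mathcal{E}$. Each slice is connected of diameter $1$, and by Ahlfors regularity the measure of slices meeting a ball of radius $r$ is $O(r^d)$. Pairing this with the Ahlfors $D$-regular Hausdorff measure on $SC$ at $D = \log 8/\log 3$ and taking $p = D/(D-d) = 3/2$ (so that $\mu(B(x,r))^{1/p'} \gtrsim r^d$ matches the upper bound on $\nu$), Proposition \ref{MT prop} gives $\dim_C(SC) \geq 3/2 > 1$.

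For the second inequality I would invoke the computation $\dim_{tH}(SC) = 1 + \log 2/\log 3$ from \cite{BBE2}; the underlying point is that every basic neighbourhood $U$ in $SC$ has $\partial U$ meeting generic horizontal slices along translates of the $d$-dimensional horizontal Cantor projection, so $\dim_H \partial U \geq d$ for every $U$ in any basis, and hence $\dim_{tH}(SC) \geq 1 + d > 1$. Combining $\dim_{tC}(SC) = 1$ with the two strict inequalities $\dim_C(SC) \geq 3/2$ and $\dim_{tH}(SC) > 1$ completes the proof.

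The main obstacle is the inequality $\dim_C(SC) > 1$: because the exact value of $\dim_C(SC)$ is a well-known open problem, the argument cannot appeal to a known numerical value and must instead go through the Pansu-type modulus lower bound applied to the slice family described above. Everything else is either a direct citation to \cite{BBE2} or an immediate consequence of Example \ref{tc of carpet}.
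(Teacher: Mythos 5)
Your overall strategy is the same as the paper's: use the Sierpinski carpet, for which Example \ref{tc of carpet} gives $\dim_{tC}SC=1$, and show that both $\dim_C SC$ and $\dim_{tH}SC$ exceed $1$ (the $\dim_{tH}$ part is, in both your argument and the paper's, just the citation $\dim_{tH}SC=1+\frac{\ln 2}{\ln 3}$ from \cite{BBE2}). However, your route to $\dim_C SC>1$ has a genuine gap in the setup of Pansu's criterion. First, the horizontal projection $\pi_1(SC)$ is all of $[0,1]$ (every column of the subdivision retains at least two squares), so it is Ahlfors $1$-regular, not $\frac{\ln 2}{\ln 3}$-regular as you assert. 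Second, and more seriously, for a generic $x\in[0,1]$ the vertical slice $SC\cap(\{x\}\times[0,1])$ is \emph{not} connected (e.g.\ for $x=\nicefrac12$ it already omits $y\in(\nicefrac13,\nicefrac23)$ at the first stage), so the family of all vertical slices fails hypothesis (i) of Proposition \ref{MT prop}, which requires connected sets of diameter bounded below.

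The fix is to restrict to those $x$ for which the full segment $\{x\}\times[0,1]$ lies in $SC$; these are exactly the points of the middle-thirds Cantor set $C$, which \emph{is} Ahlfors $d$-regular with $d=\frac{\ln 2}{\ln 3}$, and then your computation goes through: with $\nu$ the normalized $\mathcal H^d$-measure on this family and $\mu$ the Ahlfors $D$-regular measure on $SC$, $D=\frac{\ln 8}{\ln 3}=3d$, Pansu's criterion with $p'=D/d$ yields $\dim_C SC\geq \frac{D}{D-d}=\nicefrac32>1$, which suffices for the Fact. Note, though, that once you have identified this family you have identified the subset $C\times[0,1]\subset SC$, and the paper simply invokes monotonicity of conformal dimension together with the known value $\dim_C(C\times[0,1])=1+\frac{\ln 2}{\ln 3}$ (Proposition 4.1.11 in \cite{MT}); this is shorter and gives the sharper bound $\dim_C SC\geq 1+\frac{\ln 2}{\ln 3}$, since the Pansu argument run against the ambient $D$-regular measure of the whole carpet loses information relative to running it inside the subspace $C\times[0,1]$ itself.
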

\begin{proof}
If $C$ is the middle-thirds Cantor set, monotonicity of conformal dimension implies $\dim_C SC\geq \dim_C (C\times [0,1]).$  By Proposition 4.1.11 in \cite{MT},  $\dim_C (C\times [0,1])=1+\frac{\ln(2)}{\ln(3)}$, so $\dim_C SC\geq 1+\frac{\ln(2)}{\ln(3)}.$  By Theorem~5.4~in~\cite{BBE2}, $\dim_{tH}SC=1+\frac{\ln(2)}{\ln(3)}.$
\end{proof}

\begin{fact}
Conformal dimension and topological Hausdorff dimension are not comparable.
\end{fact}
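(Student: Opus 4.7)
The plan is to exhibit two metric spaces, one realizing each strict inequality. For $\dim_C>\dim_{tH},$ I would invoke~\cite{JT}, which furnishes a topologically $1$-dimensional metric space $X$ (for instance, a Jordan arc or a Laakso-type fractal) with $\dim_C X>1.$ Such an $X$ admits a basis of open sub-arcs (or self-similar tiles) whose boundaries are finite, so $\dim_H\partial U=0$ for every basis element $U,$ and hence $\dim_{tH}X\leq 1.$ Combined with $\dim_{tH}X\geq\dim_t X=1,$ this yields $\dim_{tH}X=1<\dim_C X.$

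For the reverse inequality $\dim_{tH}>\dim_C,$ I would snowflake the Euclidean square: take $Y=([0,1]^2,d^{1/2})$ where $d$ is the Euclidean metric. The identity map from $([0,1]^2,d)$ to $Y$ is quasisymmetric, so $\dim_C Y=\dim_C[0,1]^2=2.$ To bound $\dim_{tH}Y$ from below, note that any basis $\mathcal{V}$ of $Y$ must contain some $V\in\mathcal{V}$ with $\dim_t\partial V\geq 1,$ for otherwise $\dim_t Y\leq 1,$ contradicting $\dim_t Y=2.$ Since snowflaking by exponent $\tfrac{1}{2}$ doubles Hausdorff dimension, such a $\partial V$ has Hausdorff dimension at least $2$ in the snowflake metric, giving $\dim_{tH}Y\geq 3.$ The standard basis of axis-aligned open rectangles provides the matching upper bound, so $\dim_{tH}Y=3>2=\dim_C Y.$

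The principal difficulty lies in the first direction, which depends on the nontrivial existence result of~\cite{JT}: the production of a topologically $1$-dimensional space with conformal dimension strictly exceeding $1.$ The second direction is largely computational, requiring only the doubling of Hausdorff dimension under snowflaking and the homeomorphism-invariance of topological dimension.
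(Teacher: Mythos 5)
Your second example is correct and is essentially the paper's own: the paper uses the snowflaked cube $([0,1]^n)^{\alpha}$ together with Fact~\ref{effect of snowflake on tH} to get $\dim_{tH}=\frac{n-1}{\alpha}+1>n=\dim_C$, and your computation with $n=2$, $\alpha=\nicefrac{1}{2}$ is the special case $3>2$. Your lower-bound argument is sound: every basis of $[0,1]^2$ must contain a set whose boundary has topological, hence Hausdorff, dimension at least $1$, and snowflaking by exponent $\nicefrac12$ doubles that Hausdorff dimension.

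The first example, however, has a genuine gap. You need one space that simultaneously has conformal dimension $>1$ and admits a basis whose boundaries have Hausdorff dimension $0$, and the reference \cite{JT} does not supply such a space. What \cite{JT} provides is minimality of products $C\times[0,1]$ with $C$ Ahlfors $d$-regular; for these $\dim_C=1+d$, but also $\dim_{tH}=1+d$ (Lemma~4.21 in \cite{BBE2}, quoted in the Remark after Theorem~\ref{dimth theorem}), because the natural basis elements have Cantor-set boundaries of Hausdorff dimension $d$ --- there is no basis with finite boundaries, so no separation of the two dimensions occurs. Your fallback, a Jordan arc with $\dim_C>1$, would indeed have $\dim_{tH}=1$, but producing such an arc is a delicate matter that \cite{JT} does not address, and it is not needed. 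The paper sidesteps connectivity entirely: it takes a uniformly perfect Cantor set $K\subset\mathbb{R}$ with $\dim_H K=1$, so that $\dim_{tH}K=0$ trivially (clopen basis, empty boundaries), while Hakobyan's minimality theorem (Corollary~3.3 in \cite{H}) gives $\dim_C K=1>0=\dim_{tH}K$. Replace your first example with this one, or with any totally disconnected set of positive conformal dimension; as it stands, that half of your argument rests on an existence claim your citation does not establish.
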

\begin{proof}
We produce spaces $X$ and $Y$ such that $\dim_C X<\dim_{tH}X$ and $\dim_C Y>\dim_{tH}Y.$  Let $0<\alpha<1$ and let $\left([0,1]^n\right)^{\alpha}$ be the metric space that results from snowflaking $[0,1]^n$, as in Fact~\ref{effect of snowflake on tH} below.  By Fact~\ref{effect of snowflake on tH},
\begin{equation*}
\dim_{tH}\left(([0,1]^n)^{\alpha}\right)=\frac{n-1}{\alpha}+1>n=\dim_C\left(([0,1]^n)^{\alpha}\right).
\end{equation*}
On the other hand, if $K\subset\mathbb{R}$ is a uniformly perfect Cantor set with $\dim_HK=1,$ then $\dim_CK=1>\dim_{tH}K=0$ by Corollary 3.3 in \cite{H}.
\end{proof}

\begin{proposition}\label{arcprop}
 If $K$ is homeomorphic to $[0,1],$ then $\dim_{tC}K=1.$
\end{proposition}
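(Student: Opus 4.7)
The plan is to prove the equality by verifying the two inequalities $\dim_{tC} K \geq 1$ and $\dim_{tC} K \leq 1$ separately. The lower bound is immediate: since $K$ is homeomorphic to $[0,1]$ and topological dimension is a homeomorphism invariant, $\dim_t K = 1$, and the chain of inequalities in \eqref{f1} then gives $\dim_{tC} K \geq \dim_t K = 1$.

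For the upper bound the strategy is to exhibit an explicit basis for $K$ whose elements all have boundary of conformal dimension at most $0$. I would fix a homeomorphism $\gamma : [0,1] \to K$ and consider the basis obtained by pushing the standard basis of $[0,1]$ forward through $\gamma$, namely
\[
\mathcal{U} = \bigl\{ \gamma((a,b)) : 0 < a < b < 1 \bigr\} \cup \bigl\{ \gamma([0,b)) : 0 < b \leq 1 \bigr\} \cup \bigl\{ \gamma((a,1]) : 0 \leq a < 1 \bigr\}.
\]
Because $\gamma$ is a homeomorphism from the compact space $[0,1]$ onto $K$, every element of $\mathcal{U}$ is open in $K$, and $\mathcal{U}$ forms a basis for the topology of $K$. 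The key observation is that for any $U \in \mathcal{U}$, the boundary $\partial_K U$ consists of at most two points, namely the $\gamma$-images of the endpoints of the underlying subinterval of $[0,1]$ that do not already lie in $U$. Finite sets have Hausdorff dimension zero, so \eqref{f2} yields $\dim_C \partial_K U \leq \dim_H \partial_K U = 0$, and the definition \eqref{defc} of topological conformal dimension gives $\dim_{tC} K \leq 1$.

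No step here presents real difficulty; the argument rests on the transparent boundary structure of basic arcs and requires no quasisymmetric machinery. The proof works for an arbitrary metric arc, rather than for the flat arc $[0,1]$ only, precisely because the conformal dimension of a set of at most two points is zero independently of the ambient metric, so the existence of the parametrizing homeomorphism is enough to control the boundary dimensions in the definition of $\dim_{tC}$.
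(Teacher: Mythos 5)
Your argument is correct and is essentially the same as the paper's: the lower bound comes from $\dim_t K = 1$ together with \eqref{f1}, and the upper bound comes from pushing the standard basis of $[0,1]$ forward through the parametrizing homeomorphism so that every basic set has boundary consisting of at most two points, hence conformal dimension $0$. Your version is slightly more careful in listing the half-open basis elements at the endpoints and in noting that the boundary has \emph{at most} two points, but the substance is identical.
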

\begin{remark}In particular, the topological conformal dimension of the von Koch snowflake is $1.$
\end{remark}
\begin{proof}[Proof of Proposition \ref{arcprop}]
Inequality (\ref{f2}) implies $\dim_{tC}K\geq 1.$  On the other hand, if $K$ is homeomorphic to $[0,1]$ then the usual basis of $[0,1]$ gives a basis $\mathcal{U}$ for $K$ such that $|\partial U|= 2$ for all $U\in\mathcal{U}.$  Then $\dim_C \partial U=0$ for all $U\in\mathcal{U}$ and hence $\dim_{tC}K\leq 1.$
\end{proof}

There are some well known fractals for which the tC-dimension remains unknown, such as Rickman's rug (defined in~\cite{MT}) and the Heisenberg group.  We can, however, compute both the tC and tH-dimensions of the Menger sponge (defined in \cite{MT}, section 3.5).  The following theorem is useful for computing the tH-dimension of the sponge.

\begin{theorem}\label{dimth product}
If $X$ is nonempty, locally compact, Ahlfors $d$-regular, and totally disconnected, and if  $Y$ is any separable metric space with $\dim_{tH}Y\geq 1$, then $\dim_{tH}(X\times Y)=\dim_H X+\dim_{tH}Y.$
\end{theorem}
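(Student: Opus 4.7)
The plan is to establish the equality by proving the two inequalities separately. The upper bound exploits total disconnectedness of $X$ to build a product basis whose boundaries live only in the $Y$-direction. The lower bound runs in reverse, slicing an arbitrary basis of $X \times Y$ over $X$ via a Fubini-type inequality for Hausdorff measure available because $X$ is Ahlfors $d$-regular.

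For the upper bound $\dim_{tH}(X \times Y) \leq \dim_H X + \dim_{tH} Y$, fix $\eps > 0$ and choose a basis $\mathcal{W}$ of $Y$ witnessing $\dim_H \partial W \leq \dim_{tH} Y - 1 + \eps$ for every $W \in \mathcal{W}$. Because $X$ is locally compact and totally disconnected (the latter follows from $\dim_t X = 0$ for a nonempty Ahlfors $d$-regular totally disconnected space), it admits a basis $\mathcal{V}$ of clopen sets. Then $\{V \times W : V \in \mathcal{V},\, W \in \mathcal{W}\}$ is a basis of $X \times Y$, and for each such set $\partial(V \times W) = V \times \partial W$ since $\partial V = \varnothing$. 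Ahlfors $d$-regularity of $X$ makes the standard product estimate $\dim_H(V \times \partial W) \leq d + \dim_H \partial W$ available, giving
\[
\dim_H \partial(V \times W) \leq d + \dim_{tH} Y - 1 + \eps.
\]
Letting $\eps \to 0$ yields $\dim_{tH}(X \times Y) \leq \dim_H X + \dim_{tH} Y$.

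For the lower bound $\dim_{tH}(X \times Y) \geq \dim_H X + \dim_{tH} Y$, fix $c > \dim_{tH}(X \times Y)$ and, using separability, choose a countable basis $\mathcal{U}$ of $X \times Y$ with $\dim_H \partial U \leq c - 1$ for every $U \in \mathcal{U}$. For $x \in X$ write $U_x = \{y \in Y : (x,y) \in U\}$; a short check shows that $\{U_x : U \in \mathcal{U},\, U_x \neq \varnothing\}$ is a basis of $Y$ and that $\partial_Y U_x \subset (\partial U)_x$ for every $x$ and every $U$. The Fubini-type inequality
\[
\int_X \mathcal{H}^s((\partial U)_x) \, d\mu(x) \leq C\, \mathcal{H}^{s+d}(\partial U),
\]
valid when the first factor is Ahlfors $d$-regular, has a vanishing right-hand side whenever $s > c - 1 - d$. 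Since $\mathcal{U}$ is countable, a single $\mu$-full-measure set $X_0 \subset X$ satisfies $\dim_H (\partial U)_x \leq c - 1 - d$ for every $x \in X_0$ and every $U \in \mathcal{U}$, and the inclusion $\partial_Y U_x \subset (\partial U)_x$ passes this bound to the boundaries of the slice basis. Any $x \in X_0$ therefore exhibits a basis of $Y$ witnessing $\dim_{tH} Y \leq c - d$, so $c \geq \dim_H X + \dim_{tH} Y$; the infimum over admissible $c$ completes the proof.

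The main obstacle is the Fubini-type slicing inequality for Hausdorff measure with an Ahlfors $d$-regular factor rather than the Euclidean prototype; this is a known generalization but must be stated and applied carefully in the metric-measure setting. The hypothesis $\dim_{tH} Y \geq 1$ enters implicitly in the lower bound: it forces $c - d \geq \dim_{tH} Y \geq 1 > 0$, placing the slicing argument in the regime where the boundary-dimension bound $c - 1 - d$ is nonnegative so that the conclusion $\dim_{tH} Y \leq c - d$ is informative. Without this hypothesis the formula can fail, since if $\dim_{tH} Y = 0$ then $Y$ is totally disconnected, hence so is $X \times Y$, forcing $\dim_{tH}(X \times Y) = 0$ regardless of $\dim_H X$.
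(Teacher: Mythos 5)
Your upper bound is essentially the paper's argument: a clopen basis for $X$ (from local compactness plus total disconnectedness), a near-optimal basis for $Y$, and the product estimate $\dim_H(X\times\partial W)\leq \dim_H\partial W+\overline{\dim}_M X=\dim_H\partial W+\dim_H X$, where Ahlfors regularity is what supplies $\overline{\dim}_M X=\dim_H X$ (the paper cites Mattila, Theorem 5.7 and Corollary 8.10, for these two steps). Your lower bound, however, takes a genuinely different route. The paper never slices a basis of $X\times Y$; it invokes the Balka--Buczolich--Elekes characterization of $\dim_{tH}$ via sets $S$ with totally disconnected complement (their Theorem 3.6), reduces the lower bound to showing $\dim_H S\geq\dim_H X+\dim_{tH}Y-1$ for every such $S$, and proves that by applying the Eilenberg/coarea inequality to $S$ itself and then the same characterization a second time to the slice $S\cap(\{x\}\times Y)$. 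You instead extract a countable near-optimal basis of $X\times Y$ (legitimate: $X$ is doubling hence separable, $Y$ is separable by hypothesis, so the product is second countable), apply the same coarea inequality to each boundary $\partial U$, and pass the resulting slice bound to $\partial_Y U_x\subset(\partial U)_x$ on a single full-measure set of $x$, obtaining a basis of $Y$ directly. Both proofs hinge on the identical Eilenberg-type estimate; yours trades the external characterization theorem for a countable-basis and almost-everywhere intersection argument, which makes it somewhat more self-contained, and your account of where $\dim_{tH}Y\geq1$ is needed (to keep the slice conclusion from degenerating when the slice boundaries could be empty) matches the role that hypothesis plays in the paper. The argument is correct; the only cosmetic quibble is that your parenthetical deriving the clopen basis is garbled (total disconnectedness is a hypothesis, and what you need is that it plus local compactness forces $\dim_t X=0$, which is the paper's citation of Willard).
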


\begin{proof}
If $\dim_{tH}Y=\infty$ then the statement is trivial, so assume $\dim_{tH}Y<\infty.$  We will first show 
\begin{equation}\label{tH1}
\dim_{tH}(X\times Y)\leq \dim_H X+\dim_{tH}Y.
\end{equation}
Since $X$ is locally compact, $\dim_t X=0$ by Theorem~29.7~in~\cite{SW}, so X has a basis $\mathcal{U}$ consisting of clopen sets.  Let $\eps>0$ and choose a basis $\mathcal{V}$ of $Y$ such that $\dim_H\partial V\leq\dim_{tH}V-1+\eps$ for all $V\in\mathcal{V}.$  For $U\times V\in \mathcal{U}\times\mathcal{V}$ we have $\partial (U\times V)=U\times\partial V.$ Since $X$ is Ahlfors regular, Theorem~5.7~in~\cite{PM} implies that its upper Minkowski dimension $\overline{\dim}_M X$ is equal to $\dim_H X$. Corollary~8.10~in~\cite{PM} gives
\begin{equation}
\begin{split}
\dim_H(U\times\partial V)\leq \dim_H(X\times\partial V)&=\dim_H X+\dim_H\partial V\\
&\leq \dim_H X+\dim_{tH}Y-1+\eps. \label{10}
\end{split}
\end{equation}
Since $\eps>0$ was arbitrary \eqref{10} implies \eqref{tH1}. 

For the reverse inequality, we show that for any $S\subset (X\times Y)$ where $(X\times Y)\setminus S$ is totally disconnected, 
\begin{equation}
\dim_H S\geq \dim_H X+\dim_{tH}Y-1.\label{13}
\end{equation}
Once \eqref{13} is established for all such $S$, Theorem 3.6 in \cite{BBE2} will give the result.  For example, if $X$ is the middle-thirds Cantor set and if $Y=[0,1]^2$, one such subset is $S=X\times (Y\setminus (C\times C)).$  To this end, suppose there is $S$ such that (\ref{13}) fails.  Choose $\beta$ such that 
\begin{equation}
\dim_{H}(S)<\beta<\dim_H X+\dim_{tH} Y-1  \label{11}
\end{equation}

Note that $X$ is proper since it is complete and Ahlfors regular.  Put $\dim_H X=d.$  By virtue of the coarea inequality (Theorem~2.10.25~in~\cite{HF}) 
\begin{equation}\label{coarea}
\int_X^* \mathcal{H}^{\beta-d}(S\cap(\{x\}\times Y))~d\mathcal{H}^d(x)\leq C \mathcal{H}^{\beta}(S).
\end{equation}
Then $\mathcal{H}^d(S)=0$ since $\beta>\dim_H S,$ so there exists $x\in X$ such that 
\begin{equation}\label{12}
\dim_H (S\cap (\{x\}\times Y))\leq \beta-\dim_H X.
\end{equation}
On the other hand, since $Y$ is separable,
\begin{equation}\label{14}
\dim_H(S\cap(\{x\}\times Y))\geq\dim_{tH}Y-1
\end{equation}
by Theorem 3.6 in \cite{BBE2}.  Inequalities \eqref{12} and \eqref{14} yield
\begin{align*}
\dim_{tH}Y-1\leq\beta-\dim_H X,
\end{align*}
which contradicts \eqref{11}.
\end{proof}

Let $M$ denote the Menger sponge.  We compute $\dim_{tH}M$ followed by $\dim_{tC}M.$

\begin{example}\label{th of sponge}
$\dim_{tH}M=1+\frac{\ln(4)}{\ln(3)}.$
\end{example}
\begin{proof}
Consider the sets of intervals 
\begin{align*}
W'&=\{[0,b):0<b<1;~b~\text{is a dyadic rational}\},\\
W''&=\{(a,1]:0<a<1;~a~\text{is a dyadic rational}\},\\
W'''&=\{(a,b):0< a<b<1;~a~\text{and}~b~\text{are dyadic rationals}\}.
\end{align*}
Put $W=W'\cup W''\cup W'''$ and let $\mathcal{U}=M\cap(W\times W\times W).$  Note that $\mathcal{U}$ is a basis of $M$ such that for all $U\in \mathcal{U},~ \partial U$ is a union of at most six sides of a cube in $M.$  Each side of $\partial U$ is a finite union of sets geometrically similar to $C\times C,$ where $C$ is the middle-thirds Cantor set.  Then $\dim_H\partial U=\frac{\ln(4)}{\ln(3)}$ so that $\dim_{tH}M\leq 1+\frac{\ln(4)}{\ln(3)}.$

On the other hand, $M$ contains $C\times SC$, so $\dim_{tH}M\geq\dim_{tH}(C\times SC)$ by monotonicity of the topological Hausdorff dimension.  Note that Theorem~5.4~in~\cite{BBE2} gives $\dim_{tH}SC=1+\frac{\ln(2)}{\ln(3)}$.  Since $C\times SC$ satisfies the hypotheses of Theorem~\ref{dimth product}, 
\begin{align*}
\dim_{tH}(C\times SC)=\dim_H C+\dim_{tH}SC=1+\frac{\ln(4)}{\ln(3)}.
\end{align*}
Thus $\dim_{tH}M\geq 1+\frac{\ln(4)}{\ln(3)}.$
\end{proof}

\begin{example}
$\dim_{tC}M=1.$
\end{example}
\begin{proof}
Let $\mathcal{U}$ be as in Example \ref{th of sponge}.  A similar argument shows that $\dim_{tC}M\leq 1+\dim_C (C\times C).$  Since $C\times C$ is uniformly disconnected, $\dim_C (C\times C)=0$~(\cite{MT}, section 1.3], so $\dim_{tC}M\leq 1.$  By monotonicity $\dim_{tC}M\geq 1.$
\end{proof}

\section{Quasisymmetric Distortion of Topological Hausdorff Dimension}

Topological Hausdorff dimension is not quasisymmetrically invariant.  For example, it increases under the snowflake transformation: 
\begin{fact}\label{effect of snowflake on tH}
Let $(X,d)$ be a metric space and let $0<\alpha<1.$  If $X^{\alpha}=(X,d^{\alpha})$ is the snowflaked metric space defined by the transformation $d(x,y)\mapsto d(x,y)^{\alpha}$, then $\dim_{tH} (X^{\alpha})=\frac{1}{\alpha}(\dim_{tH}X-1)+1.$
\end{fact}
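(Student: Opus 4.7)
The core observation is that snowflaking does not change the underlying topology: the identity map $(X,d)\to(X,d^\alpha)$ is a homeomorphism since $d^\alpha$ induces the same topology as $d$ (one can compare $\eps$-balls and $\eps^\alpha$-balls). Consequently, the open sets of $X$ and $X^\alpha$ coincide, a subset of $X$ is a basis for $X$ if and only if it is a basis for $X^\alpha$, and for any $U\subset X$ the boundary $\partial U$ is the same set in both spaces. So the only thing that changes when passing from $X$ to $X^\alpha$ is the numerical value of the Hausdorff dimension of each boundary $\partial U$. The plan is to use the standard snowflake-scaling law for Hausdorff dimension and plug it into the definition of $\dim_{tH}$.

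First I would record the lemma that for every set $E\subset X$,
\begin{equation*}
\dim_H(E,d^\alpha)=\tfrac{1}{\alpha}\dim_H(E,d).
\end{equation*}
This is immediate from the definition of Hausdorff measure: covers of $E$ in $(X,d)$ of diameter $\le\delta$ correspond exactly to covers of $E$ in $(X,d^\alpha)$ of diameter $\le\delta^\alpha$, and $(\diam_{d^\alpha}E_j)^p=(\diam_d E_j)^{\alpha p}$, so $\mathcal H^p$ with respect to $d^\alpha$ agrees (up to the usual $\inf\lim$ machinery) with $\mathcal H^{\alpha p}$ with respect to $d$.

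Next I would establish the upper bound $\dim_{tH}(X^\alpha)\le\frac{1}{\alpha}(\dim_{tH}X-1)+1$. Fix $\eps>0$ and choose a basis $\mathcal U$ of $X$ with $\dim_H(\partial U,d)\le\dim_{tH}X-1+\eps$ for every $U\in\mathcal U$. The same collection $\mathcal U$ is a basis of $X^\alpha$, and the scaling lemma gives
\begin{equation*}
\dim_H(\partial U,d^\alpha)=\tfrac{1}{\alpha}\dim_H(\partial U,d)\le\tfrac{1}{\alpha}(\dim_{tH}X-1+\eps),
\end{equation*}
so by definition $\dim_{tH}(X^\alpha)\le\frac{1}{\alpha}(\dim_{tH}X-1+\eps)+1$, and letting $\eps\to0$ yields the desired inequality. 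The lower bound is the symmetric argument: start from a basis $\mathcal U$ of $X^\alpha$ witnessing $\dim_H(\partial U,d^\alpha)\le\dim_{tH}(X^\alpha)-1+\eps$, view it as a basis of $X$, apply the scaling law in the form $\dim_H(\partial U,d)=\alpha\dim_H(\partial U,d^\alpha)$, and rearrange to get $\dim_{tH}(X^\alpha)\ge\frac{1}{\alpha}(\dim_{tH}X-1)+1$.

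\textbf{Expected obstacle.} There is no deep obstruction; the statement is essentially an arithmetic consequence of the snowflake-scaling of $\dim_H$ together with the topological invariance of bases and boundaries. The only place where one must be careful is the bookkeeping when translating the linear relation $\dim_H(\cdot,d^\alpha)=\alpha^{-1}\dim_H(\cdot,d)$ through the ``$-1$'' shift in the definition of $\dim_{tH}$, and handling the edge cases $\dim_{tH}X\in\{-1,\infty\}$ (where the formula should be interpreted appropriately, with $X=\varnothing$ handled separately).
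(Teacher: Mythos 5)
Your proposal is correct and follows essentially the same route as the paper: both rest on the facts that snowflaking leaves the topology (hence bases and boundaries) unchanged and that $\dim_H(E,d^{\alpha})=\frac{1}{\alpha}\dim_H(E,d)$, then push this through the definition of $\dim_{tH}$. The paper compresses the two inequalities into a single substitution inside the infimum, whereas you spell them out with an $\eps$-argument and flag the degenerate cases; the content is the same.
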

\begin{proof}
For any metric space $Z,$ snowflaking has the following effect on Hasdorff dimension: $\dim_H (Z^{\alpha})=\frac{1}{\alpha}\dim_H Z$ (\cite{MT}, Corollary 1.4.18).  Applying this fact to open subsets of $X^{\alpha}$ gives  
\begin{equation}
\begin{split}
\dim_{tH}X^{\alpha}&=\inf\{c: X^{\alpha}~\text{has basis}~\mathcal{U}^{\alpha},~\dim_H (\partial U^{\alpha})\leq c-1~\forall U^{\alpha}\in\mathcal{U}^{\alpha}\}\\
&=\inf\{c: X~\text{has basis}~\mathcal{U},~\frac{1}{\alpha}\dim_H \partial U\leq c-1~\forall U\in\mathcal{U}\}\\
&=\inf\{c: X~\text{has basis}~\mathcal{U},~\dim_H \partial U\leq (\alpha c-\alpha +1)-1~\forall U\in\mathcal{U}\}\\
\end{split}
\end{equation}
The desired equality now follows from the definition of tH-dimension.
\end{proof}

More interestingly, there exist spaces which are minimal for conformal dimension, yet their topological Hausdorff dimension can be lowered by quasisymmetric maps.  This is the content of Theorem~\ref{dimth theorem}.  Note that if $C$ is a compact Ahlfors regular  metric space, then $C\times [0,1]$ is minimal for conformal dimension~\cite{JT}, i.e. $\dim_C (C\times[0,1])=\dim_H(C\times[0,1]).$

\begin{theorem}\label{dimth theorem}
Let $C$ be an Ahlfors $d-$regular metric measure space with $0<d<1,$ and let $X=C\times [0,1].$  For every $\eps>0$ there is a metric space $Y$ and a quasisymmetric map $f:X\rightarrow Y$ such that $\dim_{tH}Y\leq1+\eps.$

As a consequence, $\dim_{tC}X=1.$
\end{theorem}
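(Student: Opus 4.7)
The plan is to construct, for each $\eps>0$, a metric space $Y$ quasisymmetric to $X$ in which the Cantor direction has been compressed so that basis boundaries have small Hausdorff dimension. The construction rests on the fact that $C$, being Ahlfors $d$-regular with $d<1$, is uniformly disconnected, hence has conformal dimension zero. By a standard result for uniformly disconnected Ahlfors regular spaces (see \cite{JH}), for any $\delta\in(0,\eps)$ there exists a metric $\rho$ on $C$ quasisymmetrically equivalent to $d_C$ under which $(C,\rho)$ is Ahlfors $\delta$-regular; in particular $\dim_H(C,\rho)\le\delta$.

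The core technical step is to promote this quasisymmetric compression from $C$ to the product $X=C\times[0,1]$. I would take $Y$ to be the underlying set $C\times[0,1]$ equipped with a carefully chosen metric $d_Y$, and let $f=\id$. The requirements on $d_Y$ are that (a) horizontal slices $C\times\{t\}$ have $d_Y$-Hausdorff dimension at most $\delta$, and (b) $\id\colon X\to Y$ is quasisymmetric. The naive choice $d_Y=\max(\rho,|\cdot|)$ does \emph{not} make $\id$ quasisymmetric: the mixed case, in which one pair of points is separated mainly in the $t$-coordinate and another mainly in the Cantor coordinate, produces ratios $\rho(x,x')/d_C(x,x')$ that are not controlled by any QS modulus. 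To correct this, one warps $d_Y$ so that the interaction of the two coordinates at each scale mirrors that in $X$, in the spirit of the Tyson-type constructions cited as \cite{JT}.

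Once $(Y,d_Y)$ and $f=\id$ are in place, the topological Hausdorff dimension estimate is routine. Take $\mathcal U=\{U\times(a,b): U\subset C\text{ clopen},\ 0\le a<b\le 1\text{ rational}\}$ as a basis of $Y$. Each boundary $\partial_Y(U\times(a,b))=U\times\{a,b\}$ is the disjoint union of two clopen subsets of $(C,\rho)$, and hence has Hausdorff dimension at most $\delta<\eps$. Consequently $\dim_{tH}Y\le 1+\eps$.

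For the consequence $\dim_{tC}X=1$: since $X$ contains the segment $\{x_0\}\times[0,1]$, we have $\dim_t X\ge 1$ and hence $\dim_{tC}X\ge 1$ by \eqref{f1}. Conversely, by the quasisymmetric invariance of $\dim_{tC}$ and the inequality $\dim_{tC}\le\dim_{tH}$ of \eqref{f1}, $\dim_{tC}X=\dim_{tC}Y\le\dim_{tH}Y\le 1+\eps$ for every $\eps>0$, so $\dim_{tC}X\le 1$. The principal obstacle of the proof is the construction of the warped metric $d_Y$ and the verification that $\id\colon X\to Y$ is quasisymmetric; every other step follows in a straightforward way from the definitions and the facts already established.
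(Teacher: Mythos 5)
Your outline correctly locates the difficulty but does not resolve it: the construction of the quasisymmetric map $f$ (equivalently, of the warped metric $d_Y$) is the entire content of the first assertion, and you explicitly defer it, saying only that ``one warps $d_Y$ so that the interaction of the two coordinates at each scale mirrors that in $X$.'' Worse, the route you sketch faces a genuine obstruction, not merely an unverified technicality. If $Y$ were anything like a product of a $\delta$-dimensional deformation $(C,\rho)$ with $[0,1]$, then $\dim_H Y\leq 1+\delta<1+d$; but (at least for compact $C$, as noted before the theorem) $X=C\times[0,1]$ is minimal for conformal dimension, so every quasisymmetric image $Y$ of $X$ satisfies $\dim_H Y\geq\dim_C X=1+d$. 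Hence no quasisymmetric deformation can compress \emph{all} horizontal slices to dimension $\delta<d$ while remaining product-like: the dimension $1+d$ must survive somewhere, so the compression has to be concentrated on a thin family of slices. Your requirement (a), that every slice $C\times\{t\}$ have $d_Y$-dimension at most $\delta$, is therefore aiming at the wrong target.

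The paper's proof realizes exactly this selective compression by citing Theorem~4.1 of \cite{LK}: one places the measure $\nu=\sum_n 2^{-n}\mu_n$ on $X$, where $\mu_n$ is $\mathcal H^d$ restricted to the slice $C\times\{y_n\}$ over an enumeration of the dyadic rationals $D$, and Kovalev's theorem yields a quasisymmetric $f$ with $f(B(z,r))\subset B(f(z),R)$ for $R\leq C(\alpha)r^{1/\alpha}\nu(B(z,r))^{-1/\alpha}$; Ahlfors regularity of $C$ then forces $\dim_H f(C\times\{y_j\})\leq\alpha d/(1-d)$ for each $j$, i.e.\ only the countably many dyadic slices are compressed. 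The bound $\dim_{tH}f(X)\leq 1+\eps$ then follows from Theorem~3.7 of \cite{BBE2}, since $f(X)\setminus f(C\times D)$ is totally disconnected and $\dim_H f(C\times D)\leq\eps$ (alternatively, your basis argument survives if you restrict the heights $a,b$ to $D$ --- but either way you first need the map). Your derivation of the consequence $\dim_{tC}X=1$ from the first assertion is correct and agrees with the paper. To complete the proof you must supply the quasisymmetric map, either by reproducing Kovalev's construction or by citing it; reducing to $\dim_C C=0$ and an unspecified warped product does not suffice.
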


\begin{remark} If $C$ is the middle thirds Cantor set, under the assumptions of Theorem~\ref{dimth theorem} we have $\dim_{tH}X=1+d$ by Lemma~4.21~in~\cite{BBE2}, and $\dim_C X=1+d$ by~\cite{JT} and Corollary~8.11~in~\cite{PM}. 
\end{remark}

\begin{proof}
Let $D=\{\frac{a}{2^i}:a,i\in\mathbb{N},a\leq 2^i\}$ and write $D=\{y_1,y_2,\dots\}.$  For each $n$ let $\mu_n$ be the restriction of $\mathcal{H}^d$ to $C\times\{y_n\}.$
We define a measure $\nu$ on $X$ by 
\begin{align*}
\nu(E)=\sum_{n=1}^{\infty}\frac{\mu_n(E)}{2^n}
\end{align*}

Let $\alpha>0.$  By Theorem 4.1 in ~\cite{LK} there is a quasisymmetric map $f:X\rightarrow Y$ such that for all $z=(x,y)\in X$ and for all $r>0$
\begin{equation}
\begin{split}
f(B(z,r))\subset B(f(z),R),~~\text{where}~R=\min\{C(\alpha)r^{\nicefrac{1}{\alpha}}\nu(B(z,r))^{\nicefrac{-1}{\alpha}},r\}.\label{20}
\end{split}
\end{equation}
Since $C$ is Ahlfors $d-$regular, there is a constant $K$ such that for every $x\in C$ and for $0<r\leq\diam C$
\begin{equation}
\begin{split}
\nu(B((x,y_j),r))&=\sum_{n=1}^{\infty}\frac{\mu_n(B((x,y_j),r)}{2^n}\\
&\geq \frac{\mu_j(B((x,y_j),r)}{2^j}\\
&\geq K2^{-j} r^d\\
&=K_j r^d.\label{21}\\
\end{split}
\end{equation}
Let $x_1,x_2\in C$ and put $z_1=(x_1,y_j),~z_2=(x_2,y_j),~r=|x_1-x_2|=d_X(z_1,z_2)=|z_1-z_2|.$ By \eqref{21}
\begin{equation}
\begin{split}\label{22}
R&\leq \frac{C(\alpha)r^{\nicefrac{1}{\alpha}}}{\nu(B(z_1,r))^{\nicefrac{1}{\alpha}}}\\
&\leq \frac{C(\alpha)r^{\nicefrac{1}{\alpha}}}{K_j^{\nicefrac{1}{\alpha}}\left(r^d\right)^{\nicefrac{1}{\alpha}}}\\
&=C(j,\alpha)r^{\frac{1-d}{\alpha}}\\
&=C(j,\alpha)|z_1-z_2|^{\frac{1-d}{\alpha}}
\end{split}
\end{equation}
where $C(j,\alpha)$ is a constant that depends only on $j$ and $\alpha.$  Continuity of $f$ and \eqref{20} imply $f(\overline{B}(z_1,r))\subset\overline{B}(f(z_1),R).$ Then $|f(z_1)-f(z_2)|\leq R$, so by \eqref{22}
\begin{equation}\label{23}
|f(z_1)-f(z_2)|\leq R\leq C(j,\alpha)|z_1-z_2|^{\frac{1-d}{\alpha}}.
\end{equation}
By \eqref{23} we see that for all $j$
\begin{align*}
\dim_H f(C\times\{y_j\})\leq \frac{\alpha}{1-d}\dim_H (C\times\{y_j\})=\frac{\alpha d}{1-d}.
\end{align*}
Since $\alpha$ was arbitrary, for every $\eps>0$ there is a quasisymmetric map $f_{\eps}:X\rightarrow Y$ such that $\dim_H f_{\eps}(C\times\{y_j\})\leq \eps$ for all $j.$  Notice that since $f_{\eps}$ is a homeomorphism, $f_{\eps}(X)\setminus f_{\eps}(C\times D)$ is totally disconnected.  Also $\dim_H f_{\eps}(C\times D)=\sup_j\dim_H f_{\eps}(C\times \{y_j\})\leq \eps$ so that Theorem 3.7~in~\cite{BBE2} gives $\dim_{tH} f_{\eps}(X)\leq 1+\eps.$

To see that $\dim_{tC}X=1,$ first note that $\dim_{tC}X\geq 1$ by montonicity.  Quasisymmetric invariance of the tC-dimension coupled with \eqref{f1} shows that for every $\eps>0,$
\begin{equation*}
\dim_{tC}X=\dim_{tC}f(X)\leq\dim_{tH}f(X)\leq 1+\eps.  \qedhere
\end{equation*}
\end{proof}

As stated in the introduction, topological conformal dimension gives a lower bound on the topological Hausdorff dimensions of quasisymmetric images of a given metric space. 
\begin{equation}\label{24}
\dim_{tC}X\leq\inf\{\dim_{tH}f(X):f\text{ is quasisymmetric}\}.
\end{equation}

\begin{question}
Does equality hold in \eqref{24} for every metric space?
\end{question}

\subsection*{Acknowledgement} This paper is based on a part of a PhD thesis written by the author under the supervision of Leonid Kovalev.  The author thanks the anonymous referee for many useful suggestions in revising this paper.

\begin{bibdiv}
\begin{biblist}

\bib{BBE2}{article}{
   author={Balka, Richard},
   author={Buczolich, Zoltan},
   author={Elekes, Marton},
   title={A new fractal dimension: the topological Hausdorff dimension},
   journal={arXiv},
   volume ={1108.4292v2},
   date={2013},
}

\bib{CB}{article}{
   author={Bishop, Christopher J.},
   title={Quasiconformal mappings which increase dimension},
   journal={Ann. Acad. Sci. Fenn. Math.},
   volume={24},
   date={1999},
   number={2},
   pages={397--407},
   issn={1239-629X},
   review={\MR{1724076 (2000i:30044)}},
}

\bib{BT}{article}{
   author={Bishop, Christopher J.},
   author={Tyson, Jeremy T.},
   title={Locally minimal sets for conformal dimension},
   journal={Ann. Acad. Sci. Fenn. Math.},
   volume={26},
   date={2001},
   number={2},
   pages={361--373},
   issn={1239-629X},
   review={\MR{1833245 (2002c:30027)}},
}

\bib{HF}{book}{
   author={Federer, Herbert},
   title={Geometric measure theory},
   series={Die Grundlehren der mathematischen Wissenschaften, Band 153},
   publisher={Springer-Verlag New York Inc., New York},
   date={1969},
   pages={xiv+676},
   review={\MR{0257325 (41 \#1976)}},
}

\bib{GM}{article}{
   author={Garc{\'{\i}}a-M{\'a}ynez, A.},
   author={Illanes, Alejandro},
   title={A survey on unicoherence and related properties},
   journal={An. Inst. Mat. Univ. Nac. Aut\'onoma M\'exico},
   volume={29},
   date={1989},
   pages={17--67 (1990)},
   issn={0185-0644},
   review={\MR{1119888 (92i:54035)}},
}

\bib{H}{article}{
   author={Hakobyan, Hrant},
   title={Conformal dimension: Cantor sets and Fuglede modulus},
   journal={Int. Math. Res. Not. IMRN},
   date={2010},
   number={1},
   pages={87--111},
   issn={1073-7928},
   review={\MR{2576285 (2011c:30139)}},
   doi={10.1093/imrn/rnp115},
}

\bib{PH}{article}{
   author={Ha{\"{\i}}ssinsky, Peter},
   title={A sewing problem in metric spaces},
   journal={Ann. Acad. Sci. Fenn. Math.},
   volume={34},
   date={2009},
   number={2},
   pages={319--345},
   issn={1239-629X},
   review={\MR{2553798 (2010j:30113)}},
}

\bib{JH}{book}{
   author={Heinonen, Juha},
   title={Lectures on analysis on metric spaces},
   series={Universitext},
   publisher={Springer-Verlag},
   place={New York},
   date={2001},
   pages={x+140},
   isbn={0-387-95104-0},
   review={\MR{1800917 (2002c:30028)}},
   doi={10.1007/978-1-4613-0131-8},
}

\bib{LK}{article}{
   author={Kovalev, Leonid V.},
   title={Conformal dimension does not assume values between zero and one},
   journal={Duke Math. J.},
   volume={134},
   date={2006},
   number={1},
   pages={1--13},
   issn={0012-7094},
   review={\MR{2239342 (2007c:51016)}},
   doi={10.1215/S0012-7094-06-13411-7},
}

\bib{LS}{article}{
   author={Luukkainen, Jouni},
   author={Saksman, Eero},
   title={Every complete doubling metric space carries a doubling measure},
   journal={Proc. Amer. Math. Soc.},
   volume={126},
   date={1998},
   number={2},
   pages={531--534},
   issn={0002-9939},
   review={\MR{1443161 (99c:28009)}},
   doi={10.1090/S0002-9939-98-04201-4},
}

\bib{MT}{book}{
   author={Mackay, John M.},
   author={Tyson, Jeremy T.},
   title={Conformal dimension},
   series={University Lecture Series},
   volume={54},
   note={Theory and application},
   publisher={American Mathematical Society},
   place={Providence, RI},
   date={2010},
   pages={xiv+143},
   isbn={978-0-8218-5229-3},
   review={\MR{2662522 (2011d:30128)}},
}

\bib{BM}{book}{
   author={Mandelbrot, Benoit B.},
   title={The fractal geometry of nature},
   note={Schriftenreihe f\"ur den Referenten. [Series for the Referee]},
   publisher={W. H. Freeman and Co., San Francisco, Calif.},
   date={1982},
   pages={v+460},
   isbn={0-7167-1186-9},
   review={\MR{665254 (84h:00021)}},
}

\bib{PM}{book}{
   author={Mattila, Pertti},
   title={Geometry of sets and measures in Euclidean spaces},
   series={Cambridge Studies in Advanced Mathematics},
   volume={44},
   note={Fractals and rectifiability},
   publisher={Cambridge University Press, Cambridge},
   date={1995},
   pages={xii+343},
   isbn={0-521-46576-1},
   isbn={0-521-65595-1},
   review={\MR{1333890 (96h:28006)}},
}

\bib{TV}{article}{
   author={Tukia, P.},
   author={V{\"a}is{\"a}l{\"a}, J.},
   title={Quasisymmetric embeddings of metric spaces},
   journal={Ann. Acad. Sci. Fenn. Ser. A I Math.},
   volume={5},
   date={1980},
   number={1},
   pages={97--114},
   issn={0066-1953},
   review={\MR{595180 (82g:30038)}},
}

\bib{JT}{article}{
   author={Tyson, Jeremy T.},
   title={Sets of minimal Hausdorff dimension for quasiconformal maps},
   journal={Proc. Amer. Math. Soc.},
   volume={128},
   date={2000},
   number={11},
   pages={3361--3367},
   issn={0002-9939},
   review={\MR{1676353 (2001b:30033)}},
   doi={10.1090/S0002-9939-00-05433-2},
}

\bib{TW}{article}{
   author={Tyson, Jeremy T.},
   author={Wu, Jang-Mei},
   title={Quasiconformal dimensions of self-similar fractals},
   journal={Rev. Mat. Iberoam.},
   volume={22},
   date={2006},
   number={1},
   pages={205--258},
   issn={0213-2230},
   review={\MR{2268118 (2008a:30024)}},
   doi={10.4171/RMI/454},
}

\bib{GW}{book}{
   author={Whyburn, Gordon Thomas},
   title={Analytic topology},
   series={American Mathematical Society Colloquium Publications, Vol.
   XXVIII},
   publisher={American Mathematical Society, Providence, R.I.},
   date={1963},
   pages={x+280},
   review={\MR{0182943 (32 \#425)}},
}

\bib{SW}{book}{
   author={Willard, Stephen},
   title={General topology},
   note={Reprint of the 1970 original [Addison-Wesley, Reading, MA;
   MR0264581]},
   publisher={Dover Publications, Inc., Mineola, NY},
   date={2004},
   pages={xii+369},
   isbn={0-486-43479-6},
   review={\MR{2048350}},
}

\end{biblist}
\end{bibdiv}

\end{document}